\documentclass{amsart}
\usepackage{amssymb}
\usepackage{amsmath}
\usepackage{amscd}
\usepackage{url}
\usepackage{bbm}
\usepackage[draft=false]{hyperref}

\newtheorem{prop}{Proposition}[section]

\newtheorem{lem}[prop]{Lemma}
\newtheorem{thm}[prop]{Theorem}
\newtheorem{cor}[prop]{Corollary}
\theoremstyle{definition}

\newtheorem{remar}[prop]{Remark}

\newcommand{\Norm}{{\mathrm {Norm}}}

\newcommand{\tr}{{\mathrm {tr}}}

\newcommand{\spin}{{\mathrm {spin}}}
\newcommand{\Spin}{{\mathrm {Spin}}}

\newcommand{\Gal}{\mathrm {Gal}}

\newcommand{\A}{{\mathbb A}}
\newcommand{\CC}{{\mathbb C}}
\newcommand{\C}{{\mathbb C}}

\newcommand{\R}{{\mathbb R}}
\newcommand{\PP}{{\mathbb P}}
\newcommand{\QQ}{{\mathbb Q}}
\newcommand{\Q}{{\mathbb Q}}
\newcommand{\ZZ}{{\mathbb Z}}
\newcommand{\Z}{{\mathbb Z}}

\newcommand{\HH}{{\mathfrak H}}

\newcommand{\AAA}{{\mathcal A}}

\newcommand{\OOO}{{\mathcal O}}

\newcommand{\pp}{{\mathfrak p}}

\newcommand{\nn}{{\mathfrak n}}

\newcommand{\qq}{{\mathfrak q}}

\newcommand{\FF}{{\mathbb F}}

\newcommand{\GL}{\mathrm {GL}}

\newcommand{\SL}{\mathrm {SL}}
\newcommand{\Sp}{\mathrm {Sp}}
\newcommand{\JR}{\mathrm {JR}}

\newcommand{\GSp}{\mathrm {GSp}}

\newcommand{\Qbar}{\overline{\mathbb Q}}

\newcommand{\Fbar}{\overline{\mathbb F}}
\newcommand{\rhobar}{\overline{\rho}}

\newcommand{\dd}{{\mathfrak d}}

\newcommand{\kk}{{\mathbf k}}
\newcommand{\LL}{{\mathbf l}}

\newcommand{\divv}{\mathrm{div}}

\newcommand{\lmfdbecnf}[4]{\href
    {https://www.lmfdb.org/EllipticCurve/#1/#2/#3/#4}
    {\texttt{#2-#3#4}}}

\newcommand{\lmfdbhmf}[3]{\href
    {https://www.lmfdb.org/ModularForm/GL2/TotallyReal/#1/holomorphic/#1-#2-#3}
    {\texttt{#1-#2-#3}}}

\DeclareFontFamily{U}{wncy}{}
    \DeclareFontShape{U}{wncy}{m}{n}{<->wncyr10}{}
    \DeclareSymbolFont{mcy}{U}{wncy}{m}{n}
    \DeclareMathSymbol{\Sh}{\mathord}{mcy}{"58} 

\begin{document}
\title{Residual paramodularity of a certain Calabi-Yau threefold}
\author{Neil Dummigan}
\author{Gonzalo Tornaría}

\date{December 16th, 2024.}
\address{University of Sheffield\\ School of Mathematical and Physical Sciences\\
Hicks Building\\ Hounsfield Road\\ Sheffield, S3 7RH\\
U.K.}
\email{n.p.dummigan@shef.ac.uk}
\address{Centro de Matemática, Universidad de la República, Montevideo, Uruguay}
\email{tornaria@cmat.edu.uy}

\begin{abstract}
    \tolerance=500
    We prove congruences of Hecke eigenvalues between cuspidal Hilbert newforms $f_{79}$ and $h_{79}$ over $F=\Q(\sqrt{5})$, of weights $(2,2)$ and $(2,4)$ respectively, level of norm $79$. In the main example,  the modulus is a divisor of $5$ in some coefficient field, in the secondary example a divisor of $2$. 

The former allows us to prove that the $4$-dimensional mod-$5$ representation of $\Gal(\Qbar/\Q)$ on the $3^{\mathrm{rd}}$ cohomology of a certain Calabi-Yau threefold comes from a Siegel modular form $F_{79}$ of genus $2$, weight $3$ and paramodular level $79$. This is a weak form of a conjecture of Golyshev and van Straten. 

In aid of this, we prove also a congruence of Hecke eigenvalues between $F_{79}$ and the Johnson-Leung-Roberts lift $\JR(h_{79})$, which has weight $3$ and paramodular level $79\times 5^2$.
\end{abstract}

\subjclass[2020]{11F33, 11F41, 11F46, 11G40, 14J32}

\keywords{Hilbert modular form, paramodular form, Calabi-Yau threefold}

\maketitle

\section{Background and motivation} Golyshev and van Straten \cite[Theorem 2.5]{GvS} constructed a flat, projective family $f: \mathcal{Y}\rightarrow\PP^1$, defined over $\ZZ$, whose fibres $Y_t:=f^{-1}(t)$, for $t\notin\Sigma_C:=\{0,\infty, \frac{-11\pm 5\sqrt{5}}{32}\}$, are Calabi-Yau $3$-folds with Hodge-numbers $h^{0,3}=h^{1,2}=h^{2,1}=h^{3,0}=1$. It follows from their Theorem 2.12 that for any $t\in (\PP^1-\Sigma_C)(\Q)$, the representation of $\Gal(\Qbar/\Q)$ on $H^3_{\text{\'et}}(Y_t\otimes\Qbar, \FF_5)$ is induced from that of $\Gal(\Qbar/\Q(\sqrt{5}))$ on $E_{L, t/u}(\Qbar)[5](-1)$ (Tate twist of $5$-torsion), where $u=\frac{-11\pm 5\sqrt{5}}{2}$ and $E_{L, t/u}$ is the fibre at $t/u$ in a certain pencil of elliptic curves, isogenous to the standard Legendre family. The values of $u$ are among those giving singular fibres of the Ap\'ery family $E_A$ of elliptic curves with $5$-torsion, and $Y_t$ is a desingularisation of a kind of convoluted fibre product of $E_L$ and $E_A$, whose fibre over $s$ is the product of $E_{L, t/s}$ and $E_{A, s}$. 

When one counts points of $Y_t(\FF_p)$, values of $s$ for which $E_{A, s}$ is nonsingular, necessarily containing a point of order $5$, contribute a multiple of $5$, whereas $u=\frac{-11\pm 5\sqrt{5}}{2}$ make a non-trivial contribution mod $5$, depending on the number of points of the other factors $E_{L, t/u}$. Putting this together with the connection between numbers of points and traces of Frobenius elements, we have the rough idea behind the proof of \cite[Theorem 2.12]{GvS}. The fact that the Ap\'ery family is equivalent to the universal elliptic curve over $X_1(5)\simeq \PP^1$ is what makes the prime $\ell=5$ special.

Since $t/u\in\Q(\sqrt{5})$, $E_{L, t/u}$ is an elliptic curve over the real quadratic field $F:=\Q(\sqrt{5})$, and hence is modular, by a theorem of Freitas, Le Hung and Siksek \cite{FLS}. Thus it shares its $L$-function (and the associated $\ell$-adic representations of $\Gal(\Qbar/F))$) with a cuspidal Hilbert newform of weight $(2,2)$ for $\Gamma_0(\mathfrak{n})$, where $\mathfrak{n}$ is the conductor of $E_{t/u}$. We see how the quadratic nature of the singular values of $u$ for the Ap\'ery family leads to this occurrence of Hilbert modular forms ``in nature''.

In the special case $t=-1$, Golyshev and van Straten observe that these Galois
conjugate elliptic curves are those labelled
\lmfdbecnf{2.2.5.1}{79.1}a1 and \lmfdbecnf{2.2.5.1}{79.2}a2
in LMFDB \cite{LMF}, with conductors $-1\pm 4\sqrt{5}$ of norm $79$,
and that the associated Hilbert modular forms are
\lmfdbhmf{2.2.5.1}{79.1}{a} and \lmfdbhmf{2.2.5.1}{79.2}{a},
either of which they call $f_{79}$.
They also rename $Y_{-1}$ as $\mathbf{Y}_{79}$.
The dimension formulas of Ibukiyama \cite{Ibu,IbuKita}
show that the space $S_3(K(79))$, of weight $3$ Siegel cusp forms for the paramodular subgroup $K(79)$ of $\Sp_4(\Q)$, is $8$-dimensional.
It has a $7$-dimensional subspace of Gritsenko lifts associated with $S^-_4(\Gamma_0(79))$, and a unique (up to scaling) non-lift Hecke eigenform $F_{79}$ (for $T_p$ and $T_{1,p^2}$, for all primes $p$).
Golyshev and van Straten conjecture that $\mathbf{Y}_{79}$ is modular, to be precise, that its $L$-function $L(\mathbf{Y}_{79},s)$, arising from the $4$-dimensional $\ell$-adic representations of $\Gal(\Qbar/\Q)$ on $H^3_{\text{\'et}}(\mathbf{Y}_{79}\otimes\Qbar, \Q_{\ell})$, is the same as the spin $L$-function $L(s, F_{79}, \spin)$, arising from Hecke eigenvalues of $F_{79}$.

Choosing $\ell=5$ (for reasons explained in the first paragraph), it would suffice to show that the $5$-adic representation $\rho_{F_{79},5}$ of $\Gal(\Qbar/\Q)$ attached to $F_{79}$ \cite{W} is isomorphic to  that on $H^3_{\text{\'et}}(\mathbf{Y}_{79}\otimes\Qbar, \Q_5)$. As an interim goal, we aim to prove that the residual representation $\rhobar_{F_{79},5}$ is isomorphic to that on $H^3_{\text{\'et}}(\mathbf{Y}_{79}\otimes\Qbar, \FF_5)$.

We consider the space $S_{(2,4)}(\Gamma_0(\mathfrak{n}))$ of Hilbert cusp forms over $F$, of non-parallel weight $(2,4)$, with $\mathfrak{n}=(1+4\sqrt{5})$, of norm $79$.
Magma \cite{BCP} tells us that this space is $4$-dimensional. It is spanned by a Hecke eigenform $g_{79}$ with Hecke eigenvalues in $F$, and a Galois conjugacy class of Hecke eigenforms (any one of which we fix and call $h_{79}$), with Hecke eigenvalues in a cubic extension $F(h_{79})$.

We observed an apparent congruence of Hecke eigenvalues between the weight-$(2,4)$ Hecke eigenform $h_{79}$ and the weight-$(2,2)$ Hecke eigenform $f_{79}$, modulo some degree-$1$ divisor $\lambda$ of $(\sqrt{5})$ in $F(h_{79})$. If this congruence holds then the residual mod-$5$ (or mod-$\lambda$), $2$-dimensional representations of $\Gal(\Qbar/F)$ attached to $f_{79}$ and $h_{79}$, namely $\rhobar_{f_{79},5}$ and $\rhobar_{h_{79},\lambda}$, are isomorphic, as are the induced $4$-dimensional representations of $\Gal(\Qbar/\Q)$. In fact, $F(h_{79})$ is generated over $F$ by an element with minimal polynomial $x^3+(\sqrt{5}-13)x^2-32x-144\sqrt{5}+304$, and $(\sqrt{5})=\lambda^3$ is totally ramified, so $\lambda$ is the unique divisor of $(\sqrt{5})$ in $F(h_{79})$.

We have seen that $\mathrm{Ind}_{F}^{\Q}(\rhobar_{f_{79}, 5})$ is isomorphic to the representation of $\Gal(\Qbar/\Q)$ on $H^3_{\text{\'et}}(\mathbf{Y}_{79}\otimes\Qbar, \FF_5)$. Therefore to show that $\rhobar_{F_{79},5}$ is isomorphic to  $H^3_{\text{\'et}}(\mathbf{Y}_{79}\otimes\Qbar, \FF_5)$, assuming that $\rhobar_{f_{79},5}\simeq\rhobar_{h_{79},\lambda}$, it would suffice to show that $\rhobar_{F_{79},5}\simeq\mathrm{Ind}_{F}^{\Q}(\rhobar_{h_{79},\lambda})$.

Now the weight-$(2,4)$ Hilbert cuspidal eigenform $h_{79}$ has its Johnson-Leung-Roberts lift $\mathrm{JR}(h_{79})$, which is a cuspidal Hecke eigenform in $S_3(K(79\times 5^2))$ \cite{JR}, and has associated $4$-dimensional Galois representation $\rho_{\mathrm{JR}(h_{79}),\lambda}\simeq\mathrm{Ind}_{F}^{\Q}(\rho_{h_{79},\lambda})$. Therefore, to show that $\rhobar_{F_{79},5}\simeq\mathrm{Ind}_{F}^{\Q}(\rhobar_{h_{79},\lambda})$, it would suffice to prove a mod-$\lambda$ congruence of Hecke eigenvalues between $F_{79}\in S_3(K(79))$ and $\mathrm{JR}(h_{79})\in S_3(K(79\times 5^2))$. 

\begin{thm}\label{thm:congruenceJR}
Let $F=\Q(\sqrt{5})$, $\mathfrak{n}=(1+4\sqrt{5})$ of norm $79$, and $h_{79}\in S_{(2,4)}(\Gamma_0(\mathfrak{n}))$ a Hecke eigenform as above, with Johnson-Leung -Roberts lift $\JR(h_{79})\in S_3(K(79\times 5^2))$. Let $F_{79}\in S_3(K(79))$ be the non-lift Hecke eigenform as above. Then for the unique prime divisor $\lambda$ of $5$ in $F(h_{79})$ there is a mod-$\lambda$ congruence of Hecke eigenvalues (of $T_p$ and $T_{1,p^2}$ for all $p\neq 5$) between $F_{79}$ and $\JR(h_{79})$.    
\end{thm}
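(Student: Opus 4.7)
My plan would be to translate the claimed Hecke congruence into an isomorphism of residual Galois representations and prove the latter. Since $\rhobar_{\JR(h_{79}),\lambda} \cong \Ind_F^\Q(\rhobar_{h_{79},\lambda})$ by construction of the JR lift, and since Brauer--Nesbitt together with the Chebotarev density theorem reduces a congruence of $T_p$ and $T_{1,p^2}$ eigenvalues for all $p\neq 5$ to an isomorphism of semisimple residual representations of $\Gal(\Qbar/\Q)$ (the characteristic polynomial of $\Frob_p$ on a $\GSp_4$-type representation being recoverable from these two Hecke eigenvalues together with the determinant), it suffices to establish
\[
\rhobar_{F_{79},5} \;\cong\; \Ind_F^\Q(\rhobar_{h_{79},\lambda}).
\]

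I would first match local data on both sides: both residual representations should be unramified outside $\{5,79,\infty\}$ (the $79$ coming from the paramodular level and from the norm of the conductor of $h_{79}$, the $5$ from the $5^2$ in the paramodular level of $\JR(h_{79})$ together with our working in residue characteristic $5$), and should have the same determinant, an explicit power of the mod-$5$ cyclotomic character determined by the weight-$3$ Siegel normalization common to both forms. Once local data agree, I would invoke a Faltings--Serre--Livn\'e style criterion: there is a computable finite set of primes $T\subset\{p:p\nmid 5\cdot 79\}$ such that agreement of the characteristic polynomials of $\Frob_p$ modulo $\lambda$ for every $p\in T$ forces the two semisimplifications to be isomorphic. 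For $p\in T$, the Hecke eigenvalues of $\JR(h_{79})$ can be produced from those of $h_{79}$ via the explicit lift formulas of \cite{JR} (and Magma gives us the eigenvalues of $h_{79}$). For $F_{79}$ one would compute directly in the $8$-dimensional space $S_3(K(79))$, using that $F_{79}$ is characterized there as the unique non-lift Hecke eigenform and that the $7$-dimensional Gritsenko-lift subspace is controlled by $S^-_4(\Gamma_0(79))$, whose Hecke data is standard.

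The main obstacle I expect is the explicit computation of the Hecke eigenvalues of $F_{79}$ at enough primes: unlike the classical or Hilbert setting, weight-$3$ paramodular Hecke computations at this level are not routine library calls and would require either setting up an algebraic modular forms realization on a definite inner form of $\GSp_4$ whose Hecke module at the relevant parahoric matches $S_3(K(79))$, or relying on existing Brumer--Poor--Yuen style tables. A secondary obstacle is keeping the Faltings--Serre--Livn\'e deciding set $T$ small, which requires first establishing that $\rhobar_{F_{79},5}$ is absolutely irreducible and, ideally, that its image is large enough inside $\GSp_4(\FF_\lambda)$ to exclude non-trivial semisimple degenerations; without such control the number of primes at which one must verify the characteristic polynomial match could become prohibitive.
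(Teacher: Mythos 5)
Your reduction of the congruence to an isomorphism $\rhobar_{F_{79},5}\cong\Ind_F^{\Q}(\rhobar_{h_{79},\lambda})$ is fine in outline, but the core step---a ``computable finite deciding set $T$'' in Faltings--Serre--Livn\'e style for mod-$5$ representations valued in $\GSp_4(\FF_5)$---is exactly what you do not supply, and it is where the approach founders. The only worked-out Faltings--Serre machinery for $\GSp_4$ is $2$-adic \cite{BPPTVY}; the paper itself remarks that already $\#\Sp_4(\FF_3)=51840$ is too large for that method without a new idea, and $\Sp_4(\FF_5)$ is far bigger. Worse, your fallback hope that the image of $\rhobar_{F_{79},5}$ is large inside $\GSp_4(\FF_5)$ is provably false in this situation: the target representation is induced from $F$, so its restriction to $\Gal(\Qbar/F)$ is reducible and the image lies in the small subgroup preserving or swapping a pair of $2$-dimensional subspaces---the paper's remarks point out precisely this smallness (it is what blocks the modularity-lifting route via \cite{T}). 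You are therefore in the bad case you yourself flag, with no effective bound on $T$ short of unconditional effective Chebotarev (astronomical constants) or a classification of extensions with bounded ramification at $\{5,79\}$ and Galois group inside a huge matrix group, neither of which is available. A further, smaller gap: a residual isomorphism controls characteristic polynomials of Frobenius only at unramified primes, whereas the theorem asserts the congruence of Hecke eigenvalues for \emph{all} $p\neq 5$, including the ramified prime $p=79$; recovering the eigenvalue congruence at $79$ would need local--global compatibility input you have not discussed.

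The paper's actual proof avoids Galois representations entirely and is an eigenvector-congruence argument. Using the quinary orthogonal/paramodular correspondence of \cite{DPRT} at the non-squarefree level $79\times 5^2$, with Atkin--Lehner signs $\varepsilon_5=+1$ and $\varepsilon_{79}=-1$, both $\JR(h_{79})$ and the $3$-dimensional space of oldforms attached to $F_{79}$ (multiplicity $3$ by Roberts--Schmidt oldform theory \cite{RS}) are realized inside a single $612$-dimensional space $V$ of algebraic modular forms for a definite quinary lattice, computed with $\ZZ$-coefficients. One computes the $2$-neighbour operator $T_2$, identifies $V_1=\ker(T_2+5)$ (dimension $3$, necessarily the $F_{79}$-old space, as $\nu_2(F_{79})=-5$) and $V_2=\ker T_2$ (dimension $6$, necessarily the Galois orbit of $\JR(h_{79})$), and verifies by linear algebra that $V_1\otimes\FF_5\cap V_2\otimes\FF_5$ is nonzero. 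Since all $T_p$ and $T_{1,p^2}$ with $p\neq 5$ act on $V_1$ and $V_2$ by the respective eigenvalues, a common eigenvector mod $5$ yields the full congruence in one stroke---no deciding set, no image computations, and the prime $79$ included. This mechanism is the missing idea that would rescue your plan; indeed your suggestion of ``an algebraic modular forms realization on a definite inner form'' gestures toward it, but as a tool for computing eigenvalues of $F_{79}$ rather than as the proof of the congruence itself.
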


To complete the proof that $\rhobar_{F_{79},5}$ is isomorphic to  $H^3_{\text{\'et}}(\mathbf{Y}_{79}\otimes\Qbar, \FF_5)$, it remains to prove the assumption that $\rhobar_{f_{79},5}\simeq\rhobar_{h_{79},\lambda}$. This is equivalent to proving the mod-$\lambda$ congruence of Hecke eigenvalues between $f_{79}\in S_{(2,2)}(\Gamma_0(\mathfrak{n}))$ and $h_{79}\in S_{(2,4)}(\Gamma_0(\mathfrak{n}))$, which was observed experimentally.
\begin{thm}\label{main} Let $F=\Q(\sqrt{5})$, $\mathfrak{n}=(1+4\sqrt{5})$ of norm $79$, and $f_{79}\in S_{(2,2)}(\Gamma_0(\mathfrak{n}))$, $h_{79}\in S_{(2,4)}(\Gamma_0(\mathfrak{n}))$ Hecke eigenforms as above. Then for the unique prime divisor $\lambda$ of $5$ in $F(h_{79})$ there is a mod-$\lambda$ congruence of Hecke eigenvalues (of $T_{\pp}$ for all $\pp\neq (\sqrt{5})$) between $f_{79}$ and $h_{79}$.
\end{thm}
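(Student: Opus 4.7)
The plan is to reduce the congruence to a finite numerical verification. I would realise both eigenforms on a common totally definite quaternion algebra over $F$ via Jacquet-Langlands, give them integral Hecke-module structures, and then exploit the associated residual Galois representations together with an effective isomorphism criterion to show that agreement of finitely many Hecke eigenvalues already forces the full congruence.

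Concretely, let $D/F$ be the totally definite quaternion algebra over $F=\Q(\sqrt{5})$ unramified at every finite place (which exists because $F$ has two real places and trivial narrow class number). By Jacquet-Langlands, $f_{79}$ corresponds to an algebraic modular form on $D^\times$ of level $U_0(\mathfrak n)$ with trivial coefficient system, while $h_{79}$ corresponds to one valued in the representation $\Sym^0\otimes\Sym^2$ of the compact group $(\HHH^1)^2$ indexed by the two archimedean places. Both spaces are finite-dimensional, and admit $\OOO_{F(h_{79})}$-integral structures coming from a maximal order of $D$ together with a standard integral lattice in $\Sym^2$, both directly accessible via Magma's Hilbert modular forms and algebraic modular forms packages. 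Each eigenform then produces a Hecke character $\TT\to\OOO_{F(h_{79})}$, and Theorem~\ref{main} becomes the statement that these two characters agree modulo $\lambda$.

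Next, I would establish an effective norm bound $B$ with the property that $a_\pp(f_{79})\equiv a_\pp(h_{79})\pmod\lambda$ for all primes $\pp$ of $F$ with $N\pp\leq B$ coprime to $\mathfrak n\cdot(\sqrt{5})$ implies the full congruence. The residual representations $\bar\rho_{f_{79},5}$ and $\bar\rho_{h_{79},\lambda}$ both take values in $\GL_2(\FF_5)$, are unramified outside $S=\{\pp\mid \mathfrak n\cdot(\sqrt{5})\}$, and are absolutely irreducible (verified by exhibiting a single Frobenius whose trace-determinant pair is incompatible with any reducible mod-$5$ image of conductor dividing $\mathfrak n$). A Livn\'e-style criterion then pins down each residual isomorphism class from traces of Frobenius at an explicit finite collection of test primes, yielding $B$. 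A Magma computation checks the congruence for all $\pp$ of $F$ up to norm $B$, using the minimal polynomial $x^3+(\sqrt{5}-13)x^2-32x-144\sqrt{5}+304$ to reduce eigenvalues in $F(h_{79})$ to $\OOO_{F(h_{79})}/\lambda\simeq\FF_5$.

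The principal obstacle is the local behaviour at the ramified prime $\qq=(\sqrt{5})$. The Serre weights suggested by the two forms, $(2,2)$ and $(2,4)$, differ a priori, so for the residual representations to match, $(2,2)$ must lie in the Serre weight set of $\bar\rho_{h_{79},\lambda}$. This forces the local type of $h_{79}$ at $\qq$ into a restricted shape (for instance, tamely ramified principal series), which must be extracted from the $\qq$-adic data of $h_{79}$; only after this local compatibility is in hand does the Livn\'e test apply cleanly. Keeping the resulting $B$ small enough for the computation to terminate in practice is the quantitative side of the same local issue, and will likely be where the bulk of the work actually sits.
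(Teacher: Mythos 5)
Your reduction to a finite trace check founders at the single step that carries all the weight: ``a Livn\'e-style criterion then pins down each residual isomorphism class\dots yielding $B$''. Livn\'e's criterion, and the Faltings--Serre method generally, compares two $\ell$-adic representations whose \emph{mod-$2$} reductions have equal traces; its effectivity rests on the deviation group being an elementary abelian $2$-group, and no analogue with usable constants exists for comparing two mod-$5$ representations of different origin. What you would actually need is an effective Chebotarev statement over the compositum of the splitting fields of $\rhobar_{f_{79},5}$ and $\rhobar_{h_{79},\lambda}$; but the splitting field of $\rhobar_{h_{79},\lambda}$ is not computable from eigenvalue data (you only know Frobenius traces, which is what you are trying to control), and bounding it abstractly via Hermite--Minkowski (all extensions of $F$ of degree at most $\#\GL_2(\FF_5)$ unramified outside $\{\mathfrak{n},(\sqrt{5})\}$) gives unconditional test-prime bounds that are astronomically infeasible. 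So ``yielding $B$'' is precisely the missing idea, not a routine citation. Your local analysis at $(\sqrt{5})$ is also misdirected: both forms have level $\Gamma_0(\mathfrak{n})$ prime to $(\sqrt{5})$, so there is no ramified local type to pin down; the genuine obstruction is that the weights $(2,2)$ and $(2,4)$ differ, so the two forms are sections of \emph{different} automorphic line bundles and no common Hecke module exists until some weight-shifting mechanism is supplied --- which your quaternionic set-up (where the two coefficient systems $\Sym^0$ and $\Sym^0\otimes\Sym^2$ remain distinct) does not provide.

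The paper resolves exactly these two problems geometrically, in characteristic $5$, and this is where its effective bound comes from. It realises $\overline{f}_{79}$ and $\overline{h}_{79}$ on the Pappas--Rapoport model (needed since $5$ ramifies in $F$), applies the partial theta operator $\Theta$ --- which conveniently kills the Fourier coefficients with $(\sqrt{5})\mid(\xi)\dd$, matching the exclusion of $T_{(\sqrt{5})}$ --- and multiplies by the generalised partial Hasse invariants $H_1, H_2$ of Reduzzi--Xiao and Diamond--Sasaki (of weights $(-1,5)$ and $(1,-1)$, $q$-expansion $1$) to bring the \emph{squares} of both forms into the single space $M_{(8,8),(-4,-4)}(U,\FF)$; squaring is forced because $(0,2)$ is not in the lattice spanned by those Hasse weights. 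The bound $B$ then comes not from Chebotarev but from a Sturm bound in the style of Burgos Gil--Pacetti: after norming the difference down to an auxiliary level $U'$ (index $J=80$), the compactified special fibre is a minimal surface of general type, so Bombieri's theorem gives $K\cdot(4J(K+D)-CD)\geq 0$, whence $C\leq 4J\left(1+\frac{K\cdot K}{K\cdot D}\right)=96$, and checking $a_{\xi}\equiv b_{\xi}\pmod{\lambda}$ for all $\xi$ with $\tr(\xi)<97$ --- in practice $1313$ Hecke eigenvalues, computed exactly as in your first step --- completes the proof. Unless you can supply a feasible effective isomorphism criterion for mod-$5$ residual representations, your plan does not close, and the weight-equalisation machinery is the part of the argument you cannot avoid.
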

\begin{cor}\label{maincor} The $4$-dimensional representation of $\Gal(\Qbar/\Q)$ on $H^3_{\text{\'et}}(\mathbf{Y}_{79}\otimes\Qbar, \FF_5)$ is isomorphic to the mod-$5$ residual representation $\rhobar_{F_{79},5}$ attached to the non-lift Hecke eigenform $F_{79}\in S_3(K(79))$. Here $K(79)$ is a paramodular subgroup of $\Sp_4(\Q)$, and $\mathbf{Y}_{79}$ is a certain Calabi-Yau $3$-fold defined over $\Q$, as above.
\end{cor}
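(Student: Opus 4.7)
The plan is to assemble the chain of isomorphisms sketched in the discussion preceding Theorems \ref{thm:congruenceJR} and \ref{main}, combining those two congruence theorems with the Golyshev--van Straten inductive description of $H^3_{\et}(\mathbf{Y}_{79}\otimes\Qbar, \FF_5)$ and the defining property of the Johnson--Leung--Roberts lift.

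First I would apply the Chebotarev density theorem together with Brauer--Nesbitt to the mod-$\lambda$ congruence of $T_{\pp}$-eigenvalues supplied by Theorem \ref{main}, obtaining an isomorphism $\rhobar_{f_{79},5}^{\mathrm{ss}} \simeq \rhobar_{h_{79},\lambda}^{\mathrm{ss}}$ of $2$-dimensional residual $\Gal(\Qbar/F)$-representations. Inducing up to $\Gal(\Qbar/\Q)$ and invoking \cite[Theorem 2.12]{GvS} at $t=-1$ yields $H^3_{\et}(\mathbf{Y}_{79}\otimes\Qbar, \FF_5) \simeq \Ind_F^\Q(\rhobar_{f_{79},5}) \simeq \Ind_F^\Q(\rhobar_{h_{79},\lambda})$ (up to semisimplification). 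On the paramodular side, the defining property of the Johnson--Leung--Roberts lift \cite{JR} gives $\rhobar_{\JR(h_{79}),\lambda} \simeq \Ind_F^\Q(\rhobar_{h_{79},\lambda})$, and Theorem \ref{thm:congruenceJR}, again via Chebotarev and Brauer--Nesbitt, gives $\rhobar_{F_{79},5}^{\mathrm{ss}} \simeq \rhobar_{\JR(h_{79}),\lambda}^{\mathrm{ss}}$. Chaining these four identifications produces
\[
\rhobar_{F_{79},5}^{\mathrm{ss}} \ \simeq \ H^3_{\et}(\mathbf{Y}_{79}\otimes\Qbar, \FF_5)^{\mathrm{ss}}
\]
as $4$-dimensional $\FF_\lambda[\Gal(\Qbar/\Q)]$-modules.

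The one delicate point—the main obstacle in the proof of the corollary itself, as opposed to in the two theorems on which it rests—is to upgrade this identification of semisimplifications to a genuine isomorphism of representations. For this one must verify that $\rhobar_{F_{79},5}$ is absolutely irreducible. Since $F_{79}$ is a non-lift Siegel paramodular eigenform with generic spin Galois representation, residual irreducibility is expected, and can be certified by computing the Hecke eigenvalues $T_p$ and $T_{1,p^2}$ for a few small primes $p$ of good reduction and exhibiting one whose mod-$\lambda$ Frobenius characteristic polynomial is irreducible over $\FF_\lambda$ (or whose image otherwise generates an irreducibly acting subgroup of $\GSp_4(\FF_\lambda)$). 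Once this irreducibility is in hand, the common semisimplification is itself irreducible, both representations coincide with it, and the corollary follows.
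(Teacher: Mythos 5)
Your reduction of the corollary to Theorems \ref{main} and \ref{thm:congruenceJR}, the $\JR$-lift property and \cite[Theorem 2.12]{GvS} is exactly the paper's chain, and your Chebotarev/Brauer--Nesbitt bookkeeping is fine (traces suffice here since $\ell=5$ exceeds the dimensions $2$ and $4$; Theorem \ref{thm:congruenceJR} even supplies $T_{1,p^2}$-eigenvalues, hence full Frobenius characteristic polynomials). The gap is in the step you yourself flag as delicate. The certificate you propose for irreducibility of $\rhobar_{F_{79},5}$ --- a prime $p$ whose Frobenius characteristic polynomial is irreducible over $\FF_\lambda=\FF_5$ --- provably cannot exist, because by the very chain you have built, $\rhobar_{F_{79},5}^{\mathrm{ss}}\simeq\Ind_F^{\Q}(\rhobar_{h_{79},\lambda})$ is induced. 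At split $p=\pp_1\pp_2$ the characteristic polynomial is $\prod_i\bigl(X^2-\overline{\mu}_{\pp_i}(h_{79})X+\Norm(\pp_i)^3\bigr)$, a product of two quadratics over $\FF_5$. At inert $p$ it has the shape $X^4-tX^2+m^2$ with $t,m\in\FF_5$ and $m^2=p^6=(p^3)^2$ a nonzero square, and such a quartic \emph{always} factors over $\FF_5$: if $t+2m$ (resp.\ $t-2m$) is a square $a^2$, one has $(X^2+aX+m)(X^2-aX+m)$ (resp.\ $(X^2+aX-m)(X^2-aX-m)$); if both are nonsquares, their product $t^2-4m^2$ is a square, so $Y^2-tY+m^2$ splits and the quartic factors as $(X^2-b)(X^2-b')$. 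For instance at $p=3$ one gets $X^4+2X^2+4=(X^2+2X+3)(X^2+3X+3)$ in $\FF_5[X]$. Your fallback --- certifying that the image is an irreducibly acting subgroup of $\GSp_4(\FF_\lambda)$ --- is not accessible from eigenvalue data alone. (A smaller point: only irreducibility over $\FF_\lambda$ is needed for the Brauer--Nesbitt upgrade, not absolute irreducibility, which a single irreducible characteristic polynomial would not certify anyway; here the restriction to $\Gal(\Qbar/\Q(\zeta_5))$ is in fact reducible, as the remark following the corollary records.)

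The repair is the paper's actual route: irreducibility is injected at the $2$-dimensional level, not verified at the $4$-dimensional one. The paper uses that $\rhobar_{f_{79},5}$ and $\rhobar_{h_{79},\lambda}$ are irreducible representations of $\Gal(\Qbar/F)$ (the former being the twisted mod-$5$ representation $E_{L,-1/u}[5](-1)$ of an elliptic curve), so Theorem \ref{main} gives a genuine isomorphism, not merely of semisimplifications; inducing preserves it, and $\Ind_F^{\Q}(\rhobar_{h_{79},\lambda})$ is itself irreducible (this amounts to $\rhobar_{h_{79},\lambda}$ not being isomorphic to its $\Gal(F/\Q)$-conjugate, as in the remark after the corollary). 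Then Theorem \ref{thm:congruenceJR} identifies $\rhobar_{F_{79},5}^{\mathrm{ss}}$ with this irreducible representation, which automatically forces $\rhobar_{F_{79},5}$ to be irreducible and equal to its semisimplification --- precisely the upgrade you wanted, with no extra computation. Since the identification in \cite[Theorem 2.12]{GvS} is an actual isomorphism, no semisimplification caveat survives anywhere in the chain.
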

\begin{proof} We reiterate the preceding arguments. Since the Hecke eigenvalues in Theorem \ref{main} are traces of Frobenius on the irreducible representations $\rhobar_{f_{79}, 5}$ and $\rhobar_{h_{79}, \lambda}$ of $\Gal(\Qbar/F)$, we deduce that these representations are isomorphic, then that the induced representations $\mathrm{Ind}_{F}^{\Q}(\rhobar_{f_{79}, 5})$ and $\mathrm{Ind}_{F}^{\Q}(\rhobar_{h_{79}, \lambda})$ of $\Gal(\Qbar/\Q)$ are isomorphic. Similarly the congruence of Hecke eigenvalues in Theorem \ref{thm:congruenceJR} implies that the representations $\mathrm{Ind}_{F}^{\Q}(\rhobar_{h_{79}, \lambda})$ and $\rhobar_{F_{79}, 5}$ of $\Gal(\Qbar/\Q)$ are isomorphic. Hence $\mathrm{Ind}_{F}^{\Q}(\rhobar_{f_{79}, 5})$ is isomorphic to $\rhobar_{F_{79}, 5}$. But $\rhobar_{f_{79}, 5}\simeq E_{L, -1/u}[5](-1)$, and by \cite[Theorem 2.12]{GvS} $\mathrm{Ind}_{F}^{\Q}(E_{L, -1/u}[5](-1))$ is isomorphic to the representation of $\Gal(\Qbar/\Q)$ on $H^3_{\text{\'et}}(\mathbf{Y}_{79}\otimes\Qbar, \FF_5)$.
\end{proof}
\begin{remar} One checks using Magma that for $F=\Q(\sqrt{5})$ and $\mathfrak{n}$ of norm $31$, although there are analogous forms $f_{31}\in S_{(2,2)}(\Gamma_0(\mathfrak{n}))$ and $h_{31}\in S_{(2,4)}(\Gamma_0(\mathfrak{n}))$, they do not satisfy a congruence modulo a divisor of $5$. However, it is highly unlikely that any of the elliptic curves over $F$, in the isogeny class associated with $f_{31}$, is $E_{L, t/u}$ for a rational value of $t$. (With sufficient effort one could convert them all to Legendre form, and check each $\lambda u$.) 

    So this does not rule out the possibility that analogues of Theorems \ref{main} and \ref{thm:congruenceJR} hold for all non-zero rational values of $t$, not just $t=-1$. Indeed for $t=1$, computations of Euler factors for $\mathbf{Y}_{431}:=Y_1$ by D. van Straten, recently extended by N. Gegelia \cite{nutsa}, suggest that it is associated with a certain $F_{431}\in S_3(K(431))$, whose Hecke eigenvalues have been computed by G. Rama and the second-named author, using the quinary forms method of \cite{RT}. One observes experimentally a congruence between the relevant $f_{431}\in S_{(2,2)}(\Gamma_0(\mathfrak{n}))$
(labelled \lmfdbhmf{2.2.5.1}{431.1}{c} in \cite{LMF}, where now $\mathfrak{n}$ has norm $431$)
and a form $h_{431}\in S_{(2,4)}(\Gamma_0(\mathfrak{n}))$, modulo a divisor of $5$ in a coefficient field of degree $6$.
\end{remar}
\begin{remar} It is our hope that Corollary \ref{maincor} will be the first step in a full proof of modularity of $\mathbf{Y}_{79}$. It appears that at present, published modularity lifting theorems are not strong enough to deduce that the representation of $\Gal(\Qbar/\Q)$ on $H^3_{\text{\'et}}(\mathbf{Y}_{79}\otimes\Qbar, \Q_{5})$ is isomorphic to $\rho_{F_{79},5}$, the $5$-adic Galois representation attached to $F_{79}$.

The residual representation $\rhobar$ is irreducible, but its restriction to $\Gal(\Qbar/F)$ (and therefore to $\Gal(\Qbar/\Q(\zeta_5))$) is reducible, with $2$-dimensional composition factors isomorphic to $\rhobar_{h_{79, 5}}$ and its conjugate. It follows that the image of $\rhobar(\Gal(\Qbar/\Q(\zeta_5))$ is too small for the condition (v) of \cite[Theorem 9.1]{T} to be satisfied.

An anonymous referee suggested instead exploiting the reducibility of $\rhobar$ (when restricted to $\Gal(\Qbar/F)$, and therefore also to $\Gal(\Qbar/K)$, where $K$ is a suitable totally imaginary quadratic extension of $F$) to apply a theorem of Allen, Newton and Thorne \cite[Theorem 1.1]{ANT}. But the local $L$-factor at $79$ of $L(s, F_{79}, \spin)$ has degree $3$, by \cite[(2.13)]{S}, whereas if the ``Steinberg'' condition 6(c) of  \cite[Theorem 1.1]{ANT} were satisfied, it would be of degree $1$.
\end{remar}
\begin{remar} There are examples of Calabi-Yau threefolds $X/\Q$ such that $H^3(X)$ is $4$-dimensional and $L(X, s)$ is proved to be $L(s, \JR(h), \spin)=L(s, h)$, with $h$ a Hilbert modular form of weight $[2,4]$ for a real quadratic field \cite{DPS}, \cite[Theorem 4.1]{CSvS}. The example $\mathbf{Y}_{79}$ would not be like this, since $F_{79}$ is not a $\JR$-lift, only congruent mod $5$ to one.

Recent work of the second-named author and collaborators, to be reported on elsewhere, has produced examples of Calabi-Yau threefolds $X/\Q$ such that $H^3(X)$ is $4$-dimensional and $L(X, s)$ is proved to be $L(s, F, \spin)$ for some paramodular form $F$ of genus $2$ and weight $3$, not a lift of something simpler like a Hilbert modular form.
The proof uses the Faltings-Serre method as extended to
$\GSp_4(\ZZ_2)$ in \cite{BPPTVY}, which is not applicable to $\mathbf{Y}_{79}$, since it follows from the mod $32$ congruence in \cite[Example 8.3]{PY} that the mod $2$ residual representation would be reducible. (Note that whereas $\#\Sp_4(\FF_2)=720$, barely manageable, $\#\Sp_4(\FF_3)=51840$, too large to apply the method to $\ell=3$, without a new idea.)
\end{remar}
\medskip

Theorem \ref{thm:congruenceJR} is proved in \S 2, using the methods of \cite[\S 11]{DPRT}, \cite{RT}. In fact, this provides a beautiful illustration of the utility of the results of \cite{DPRT} in their full generality, for a paramodular level that is not squarefree ($79\times 5^2$ in this instance).

An outline of the proof of Theorem \ref{main} is provided in \S 3.
It suffices to check the congruence by computation for sufficiently many Hecke eigenvalues, using a Sturm bound argument involving intersection theory on a Hilbert modular surface, modelled on work of Burgos Gil and Pacetti \cite{BP}. Details of the proof are in \S 5, and the latter part of \S 4.

The fact that the characteristic $5$ is ramified in the field $F=\Q(\sqrt{5})$ is an additional technical difficulty, but the tools we need, namely generalised partial Hasse invariants and partial theta operators, are available thanks to work of Reduzzi and Xiao \cite{RX}, Diamond \cite{D1} and Diamond and Sasaki \cite{DS}. We introduce what we need in \S 4.

The proof of a similar congruence modulo a divisor of $2$ (Theorem \ref{mainmod2}) is also sketched, in \S 6. 

The authors declare that the data supporting the findings of this study are available at \cite{DT}.

\section{Proof of Theorem \ref{thm:congruenceJR}}
We do more than is strictly necessary to prove the theorem. Some of the computations of Hecke eigenvalues are purely for illustration.
\subsection{Hilbert modular forms for
\texorpdfstring{$\mathbf{\QQ(\sqrt5)}$}{ℚ(√5)}}
\label{sect:HMF}
Recall that $F=\Q(\sqrt{5})$, $\mathfrak{n}=(1+4\sqrt{5})$, of norm $79$, and that $S_{(2,4)}(\Gamma_0(\mathfrak{n}))$ is $4$-dimensional, spanned by a Hecke eigenform $g_{79}$ with Hecke eigenvalues in $F$, and a Galois conjugacy class of Hecke eigenforms (any one of which we fix and call $h_{79}$), with Hecke eigenvalues in a cubic extension $F(h_{79})$ of $F$, in which $(\sqrt{5})=\lambda^3$ is totally ramified. We also met the Hecke eigenform $f_{79}\in S_{(2,2)}(\Gamma_0(\mathfrak{n}))$, which has rational Hecke eigenvalues and, in fact, spans that space. 

Magma tells us that the eigenvalues of the Hecke operator $T_{\nn}$ acting on $g_{79}$ and $h_{79}$ are $-79$ and $+79$,
hence the eigenvalues for the Atkin-Lehner involution at $\nn$
are $+1$ and $-1$, respectively.
It also gives us the following Hecke eigenvalues, where for split primes we give only the norm and do not say which factor is which. For a prime ideal $\pp$, the Hecke eigenvalue of $T_{\pp}$ acting on a form $f$ is denoted $\mu_{\pp}(f)$. For $h_{79}$ and $g_{79}$ we look only at the reductions in $\FF_5$.

\smallskip
\begin{center}
\begin{tabular}{l|rrrrrrrrrrrrrrrrrrrrrr}
$\Norm(\pp)$ &
4& 5& 9& 11& 11& 19& 19& 29& 29& 31& 31& 41& 41& 49
\\[1pt]\hline\\[-9pt]
$\mu_\pp(f_{79})$ &
1& -2& -2& 0& -4& 8& 4& 6& -2& -8& 0& 2& -2& -2
\\[2pt]
$\mu_\pp(h_{79})\bmod{\lambda}$ &
1& 0& 3& 0& 1& 3& 4& 1& 3& 2& 0& 2& 3& 3
\\[2pt]
$\mu_\pp(g_{79})\bmod{\sqrt{5}}$ &
-1& 0& -3& 0& 1& -3& -4& -1& -3& 2& 0& 2& 3& -3
\end{tabular}
\end{center}
\smallskip


In the table we observe the congruence between $f_{79}$ and $h_{79}$ (for $\pp\neq(\sqrt{5})$) that is the subject of Theorem \ref{main}, but also numerical evidence for a mod-$\lambda$ congruence of Hecke eigenvalues between $h_{79}$ and a twist of $g_{79}$, specifically that $\mu_{\pp}(h_{79})$ and $\Norm(\pp)\mu_{\pp}(g_{79})$ are the same in $\FF_5$, for all prime divisors  $\pp\neq (\sqrt{5})$ in $F$. The twisting character is that associated with the quadratic extension $\Q(\zeta_5)$ of $F$, i.e. the restriction to $\Gal(\Qbar/F)$ of the mod $5$ cyclotomic character. The Magma code, and a much longer sequence of output, are easily located in \cite{DT}.


\subsection{The Johnson-Leung-Roberts lift}
\label{sect:JR}

Since $h_{79}\in S_{2,4}(\Gamma_0(\nn))$, its $\JR$-lift \cite{JR}
is a paramodular form $\JR(h_{79})\in S_3^{\text{new}}(K(79\times5^2))$.
The $\JR$-lift is characterized by the fact that
$L(s, h_{79})$ and
$L(s, \JR(h_{79}), \Spin)$ are the same. In particular, if $\sum a_n(\JR(h_{79}))n^{-s}:=L(s, \JR(h_{79}), \Spin)$, then
\[
    a_p(\JR(h_{79})) = \begin{cases}
        0
          & \text{if $p$ is inert in $F$;} \\
        \mu_{\pp_1}(h_{79}) + \mu_{\pp_2}(h_{79})
          & \text{if $p=\pp_1\pp_2$ is split in $F$; and} \\
        \mu_{(\sqrt5)}(h_{79})
          & \text{if $p=5$ is ramified in $F$.}
    \end{cases}
\]
When $p\neq 79$,
the Hecke eigenvalue $\nu_p(\JR(h_{79}))$ for $T_p$
is the same as 
$a_p(\JR(h_{79}))$.
The Atkin-Lehner eigenvalues
are $\varepsilon_5(\JR(h_{79}))=+1$ and $\varepsilon_{79}(\JR(h_{79}))=-1$.

 The following table lists the first eigenvalues for $T_p$ modulo $\lambda$,
 obtained from the data for $h_{79}$ in the previous section.
\smallskip
\begin{center}
\begin{tabular}{c|rrrrrrrrrrrrr}
$p$ &
2& 3& 5& 7& 11& 13& 17& 19& 23& 29& 31& 37& 41
\\[1pt]\hline\\[-9pt]
$\nu_p(\JR(h_{79}))\bmod{\lambda}$ &
 0&  0&  0&  0&  1&  0&  0&  2&  0&  4&  2&  0&  0
\end{tabular}
\end{center}
\smallskip



\subsection{The paramodular form of level 79}
\label{sect:para79}
As mentioned in the introduction,
the space $S_3(K(79))$
of weight 3 paramodular forms of level 79
is $8$-dimensional.
It has a $7$-dimensional subspace of Gritsenko lifts associated with
$S^-_4(\Gamma_0(79))$, and a unique (up to scaling)
non-lift Hecke eigenform $F_{79}$ (see \cite[Example 8.3]{PY}).
The following table lists the first
eigenvalues for $T_p$ and $T_{1,p^2}$
taken from \cite{Hein}.
A complete list of eigenvalues for $T_p$ with $p<1000$ can be found at
\cite{omf5}.

\smallskip
\begin{center}
\begin{tabular}{c|rrrrrrrrrrrrr}
$p$ &
2& 3& 5& 7& 11& 13& 17& 19& 23& 29& 31& 37& 41
\\[1pt]\hline\\[-9pt]
$\nu_p$ &
-5& -5& 3& 15& 26& -15& -60& 32& 50& 24& 142& -500& 240
\\
$\nu_{1,p^2}$ &
2& 4& -10& -24& 0& -158& -156& -712& -256& -988& -640& 2668& 876
\end{tabular}
\end{center}
\smallskip

Note that, as far as the table goes, if $p$ is inert in $F=\QQ(\sqrt5)$ then the Hecke eigenvalue $\nu_p(F_{79})$ is divisible by 5.
This is an indication of the congruence with $\JR(h_{79})$,
since $T_p$-eigenvalues of $\JR$-lifts for inert primes are always zero. Moreover, the eigenvalues $\nu_p(F_{79})$ for the split primes $p=11, 19, 29, 31$ and $41$ agree in $\FF_5$ with $\nu_p(\JR(h_{79}))=\mu_{\pp_1}(h_{79})+\mu_{\pp_2}(h_{79})$,
where $p=\pp_1\pp_2$.
The Atkin-Lehner eigenvalue is $\varepsilon_{79}(F_{79})=-1$,
which also agrees with $\varepsilon_{79}(\JR(h_{79}))$.


\subsection{Orthogonal modular forms of level
\texorpdfstring{79\texttimes5\textsuperscript2}{79 × 5²}}
In order to prove Theorem~\ref{thm:congruenceJR} we will use the
results of \cite{DPRT} to realize $\JR(h_{79})$ and $F_{79}$
in a specific space of orthogonal modular forms.

Following
\cite[§6]{DPRT}, let $D^-=79$ and $D^+=5^2$, and let $L$ be the
lattice as in \cite[Definition~6.7]{DPRT}. In particular, $L$ is a
positive definite quinary lattice of (half) discriminant $D=D^-D^+$.
Then \cite[Theorem 9.9, Theorem 10.1]{DPRT}
together prove that
\begin{equation}
\label{eq:DPRT}
    S_3^{79\text{-new},+-
    }(K(79\times5^2))_G
    \quad
    \simeq
    \quad
    M_{(0,0)}(\widehat{K(L)})_G
\end{equation}
Here the superscript $+-$ on the left hand side indicates that we
restrict to forms with $\varepsilon_5=+1$ and $\varepsilon_{79}=-1$,
and the subscript $G$ on both sides indicates that we restrict to
forms of ``general type''. On the right-hand side, $M_{(0,0)}(\widehat{K(L)})$ is a space of algebraic modular forms, with trivial coefficient system, for the adelic points of the orthogonal group of $L$. It is a space of functions on the finite set of classes in the genus of $L$. The open compact subgroup $\widehat{K(L)}$ is the stabiliser of $L\otimes\A_f$. 

From §\ref{sect:JR} we know that
$\JR(h_{79})$ appears in the left hand side of \eqref{eq:DPRT}.
From §\ref{sect:para79} we know that $F_{79}$ appears on the left hand
side of \eqref{eq:DPRT}. More specifically,
by \cite[7.5.6]{RS} the space of paramodular
oldforms of level $79\times 5^2$ corresponding to $F_{79}$ has
dimension 4.
This old space is splits into $\varepsilon_5=+1$ and $\varepsilon_5=-1$ eigenspaces of dimensions $3$ and $1$, respectively \cite[table on p.~186]{RS}. Hence we know that the left hand side of
\eqref{eq:DPRT} realizes $F_{79}$ with multiplicity $3$ (for all Hecke
operators outside $5$).

The (genus of the) positive definite quinary lattice $L$ can be
characterized by
\cite[Proposition 6.9, Theorem 5.14]{DPRT}.
It is a ``special'' lattice with $e_5(L)=+1$ and $e_{79}(L)=-1$
in the sense of \cite[Definition 5.1, 5.8]{DPRT}.
For a concrete realization, it is easy to verify that
the positive definite quinary quadratic form
\begin{multline*}
    Q(x_0,x_1,x_2,x_3,x_4) =  
    x_{0}^{2} + x_{0} x_{1} + x_{1}^{2} + x_{2}^{2} + 2 x_{3}^{2}
    + 5 x_{1} x_{4} + 5 x_{2} x_{4} + 5 x_{3} x_{4} + 100 x_{4}^{2}
\end{multline*}
satisfies these requirements.
Let $V=M_{(0,0)}(\widehat{K(L)})=M_{(0,0)}(Q)$, and let $T_2$ be the
$2$-neighbour operator acting on $V$.
Let $V_1=\ker(T_2+5)$ and $V_2=\ker T_2$.
For computational purposes, and for the reduction modulo 5
given by $\otimes\,\FF_5$,
we work with $\Z$-coefficients, though we still talk of subspaces and dimensions below.

\begin{lem}\mbox{}
\begin{enumerate}
    \item The dimension of $V$ is 612.
    \item The dimension of $V_1$ is 3.
    \item The dimension of $V_2$ is 6.
    \item The dimension of $V_1\otimes\FF_5\cap V_2\otimes\FF_5$
        is 1.
\end{enumerate}
\end{lem}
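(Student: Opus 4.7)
The plan is to carry out all four parts as an explicit computation in Magma, following the template of \cite[\S 11]{DPRT} and the constructions of \cite{DPRT}.

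First, realize $L$ concretely as the quinary lattice of the quadratic form $Q$ given in the text, and enumerate the isometry classes in its genus by Kneser's $p$-neighbour algorithm (starting from $L$ with a convenient small prime and iterating until no new classes appear). Since $V=M_{(0,0)}(\widehat{K(L)})$ is the space of functions on this finite set of classes, its dimension equals the class number of the genus; the goal is to verify that this class number is $612$, which settles (1).

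Next, using the basis of $V$ indexed by the genus representatives, build the $612\times 612$ integer matrix of the $2$-neighbour operator $T_2$: for each representative lattice $L'$, iterate over the isotropic lines in $L'/2L'$, form the corresponding $2$-neighbours, identify each by isometry against the stored list of representatives, and record the corresponding matrix entry. With this matrix in hand, compute the integral kernels $V_1=\ker(T_2+5)$ and $V_2=\ker T_2$ by Smith normal form (or an equivalent LLL-based routine) and read off their ranks, which should be $3$ and $6$, yielding (2) and (3). For (4), extract a $\ZZ$-basis of the saturation of each of $V_1$ and $V_2$ inside $V=\ZZ^{612}$, reduce these bases modulo $5$ to obtain subspaces of $V\otimes\FF_5$, and compute the dimension of their intersection by linear algebra over $\FF_5$.

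The main obstacle is purely computational scale: enumerating a quinary genus of class number $612$ and then computing the full action of $T_2$ is nontrivial but well within the range of current neighbour-method implementations. A subtler point is in part (4): one must compare the reductions modulo $5$ of the saturated $\ZZ$-kernels of $T_2+5$ and $T_2$, not of a $\QQ$-basis of these kernels, since the latter could in general inflate the intersection after clearing denominators. Once the saturations are handled correctly, the lemma is verified by direct inspection of the computer's output, and in particular the one-dimensional intersection in (4) is the expected numerical witness to the mod-$\lambda$ congruence between $F_{79}$ and $\JR(h_{79})$ that Theorem~\ref{thm:congruenceJR} asserts.
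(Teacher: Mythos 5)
Your proposal is correct and is essentially the paper's own proof: the authors likewise enumerate the genus of $Q$ (612 classes) and build the $612\times612$ matrix of $T_2$ simultaneously via $2$-neighbours (using Rama's packages, in Sagemath and PARI/GP rather than Magma), then compute $V_1$ and $V_2$ by integral linear algebra and intersect their reductions over $\FF_5$. Your caution about saturation is sound but in fact automatic here, since the kernel of an integer matrix acting on $\ZZ^{612}$ is always a saturated sublattice, so reducing a $\ZZ$-basis of $\ker(T_2+5)$ and $\ker T_2$ modulo $5$ is already legitimate.
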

\begin{proof}
We compute the spaces and dimensions using the packages for computing quinary
orthogonal modular forms of
G. Rama \cite{rama_quinary}.
The genus of $Q$ has $612$ equivalence classes.
These classes and the $612\times612$ matrix corresponding to
$T_2$ are computed simultaneously via $2$-neighbours,
using the packages of Rama.
Given this (sparse) integral matrix, the subspaces $V_1$ and $V_2$,
and the intersection modulo 5,
are easy to compute using standard linear algebra over the integers
and over $\FF_5$.
The complete code and the corresponding output showing the dimensions
are as stated can be found in~\cite{DT} in two independent versions,
one for Sagemath~\cite{sagemath} and another one for PARI/GP~\cite{pari}.
\end{proof}
The eigenvalues for $T_2$ acting on $F_{79}$ and on $\JR(h_{79})$ are $-5$ and $0$, respectively. We have seen that the oldforms arising from $F_{79}$ account for a $3$-dimensional subspace of $V$, necessarily equal to $V_1$. Since $[F(h_{79}):\Q]=6$, $\JR(h_{79})$ accounts for a $6$-dimensional subspace of $V$, necessarily equal to $V_2$.
Since Hecke operators $T_p$ and $T_{1,p^2}$ (for all $p\neq 5$) act via
their eigenvalues for $F_{79}$ on $V_1$ and via their eigenvalues for
$\JR(h_{79})$ on $V_2$, the existence of this non-zero intersection $V_1\otimes\FF_5\cap V_2\otimes\FF_5$
completes the proof of Theorem~\ref{thm:congruenceJR}.

\section{An outline of the proof of Theorem \ref{main}}

The forms $f_{79}$ and $h_{79}$ may be identified with sections of certain automorphic line bundles on a Hilbert modular surface. Since we wish to show that their Hecke eigenvalues (except for $T_{(\sqrt{5})}$) are the same in $\FF_5$, it is natural to replace them by sections $\overline{f}_{79}$ and $\overline{h}_{79}$ of a Hilbert modular scheme in characteristic $5$. Since $f_{79}$ and $h_{79}$ have different weights,  $\overline{f}_{79}$ and $\overline{h}_{79}$ are sections of different line bundles. To be able to compare them, we need to bring them into the same space. The first-named author mentioned this problem to Hanneke Wiersema, emphasising the difficulty that $\ell$ is ramified in $F$, following her talk at Sheffield Number Theory Day in June 2022. He is indebted to her for directing him to an online lecture by Fred Diamond \cite{D2} (only two months earlier), which provides the generalised partial Hasse invariants (introduced by Reduzzi and Xiao \cite{RX}) and partial theta operators (new in \cite{D1}, \cite{DS}) required. See especially the final slide, which for simplicity assumes total ramification, as in our case. We should add that all that we need is in the cited papers, but the lecture gives a useful summary.

Because $T_{(\sqrt{5})}$ is excluded, it is not quite true to say that it suffices to equate algebraic $q$-expansions of $\overline{f}_{79}$ and $\overline{h}_{79}$, but a partial theta operator $\Theta$ kills the part of the $q$-expansion we are not interested in, so that it suffices to equate $q$-expansions of $\Theta(\overline{f}_{79})$ and $\Theta(\overline{h}_{79})$. After applying $\Theta$, the weights have increased from $(2,2)$ and $(2,4)$ to $(3,3)$ and $(3,5)$. To make the weights the same, we multiply by different monomials in partial Hasse invariants $H_1$ and $H_2$ with $q$-expansion $1$, of weights $(-1,5)$ and $(1,-1)$ respectively. Note that $\Theta, H_1$ and $H_2$ exist only in positive characteristic.

Having arrived at $H_2^3H_1\Theta(\overline{f}_{79})^2$ and $H_2^2\Theta(\overline{h}_{79})^2$, both of the same weight $(8,8)$, we are ready to compare $q$-expansions. The squaring is necessary because $(2,2)$ and $(2,4)$ differ by $(0,2)$, which is not in the lattice spanned by $(-1, 5)$ and $(1, -1)$, whereas $(0,4)=(-1, 5)+(1, -1)$. (There may appear to be a superfluous factor of $H_2^2$, but it is there to make sure that not only are the forms we compare in the same space, but that the common weight is parallel.) Because of the squaring, these elements are not Hecke eigenforms. So even if we could lift them to characteristic zero, we could not apply the Jacquet-Langlands correspondence to replace them by vectors in a space of quaternionic modular forms. Therefore a proof in the style of that of Theorem \ref{thm:congruenceJR} is out of the question.

We aim to prove the equality of $q$-expansions by applying a Sturm bound, directly in characteristic $5$, checking that sufficiently many Fourier coefficients are the same in $\FF_5$. (For this it is better to have the weight parallel.) Assuming appropriate normalisation, we shall actually do this by checking that sufficiently many Hecke eigenvalues of $f_{79}$ and $h_{79}$ are congruent mod $\lambda$. Note that we do not have direct access to Fourier expansions, rather Magma exploits the Jacquet-Langlands correspondence, computing Hecke eigenvalues using either algebraic modular forms for definite quaternion algebras, or the cohomology of Shimura curves. The algorithms used by Magma \cite{BCP} to compute Hecke eigenvalues of Hilbert modular forms, developed by Demb\'el\'e, Donnelly, Greenberg and Voight, are described in \cite{DV}.

Letting $G_1:=H_2^3H_1\Theta(\overline{f}_{79})^2 -H_2^2\Theta(\overline{h}_{79})^2$, we need to show that the $q$-expansion of $G_1$ is zero. For technical reasons to do with reducing the level, we have to replace $G_1$ by a kind of norm $G$, and the weight $(8,8)$ by a multiple $(8J, 8J)$, where $J=1+79=80$ is the index of a certain open compact subgroup of $\GL_2(\OOO_{F,\mathfrak{n}})$. (Recall that $\mathfrak{n}$ is a divisor of $79$.)
The Fourier coefficients are indexed by $\xi\in\dd^{-1}$, where $\dd=(\sqrt{5})$ is the different. Suppose that they vanish for all $\xi$ such that $\tr(\xi)< C$, yet that $G$ is not $0$. Considering the divisor of $G$ shows that a certain divisor $4J(K+D)-CD$ is linearly equivalent to an effective divisor. Here $K$ is a canonical divisor and $D$ is a boundary divisor (resolution of cusps).  Having reduced the level, we happen to be working on a minimal algebraic surface of general type. It follows that $K\cdot(4J(K+D)-CD)\geq 0$. Hence $(C-4J)(K\cdot D)\leq 4J(K\cdot K)$, so 
$$C\leq 4J\left(1+\frac{K\cdot K}{K\cdot D}\right).$$

This bound may be computed on a Hilbert modular surface over $\CC$, and turns out to be $96$. Thus to show that $G=0$ we just need to check that the Fourier coefficients are $0$ for all $\xi\in\dd^{-1}$ such that $\tr(\xi)<97$.
This is achieved by a large computation in Magma. 

Details of the proof we have just sketched are given in the following two sections.

\section{Geometric modular forms in characteristic \texorpdfstring{$\ell$}{ℓ}}

Let $\OOO=\OOO_{F,\sqrt{5}}$, a local completion, with residue field $\FF\simeq \FF_5$. Let $G:=\mathrm{Res}_{F/\QQ}(\GL_2)$, and let $U\subseteq G(\A)\simeq \GL_2(\A_F)$ be the open compact subgroup such that $U_{\pp}=\GL_2(\OOO_{F,\pp})$ for all $\pp\neq 3$ or $\mathfrak{n}$,
$$U_3=\{g\in \GL_2(\OOO_{F, 3}):\,\, g\equiv \begin{pmatrix} *&*\\0&1\end{pmatrix}\pmod{3}\},$$
$$U_{\mathfrak{n}}=\left\{g\in \GL_2(\OOO_{F,\mathfrak{n}}):\,\,g\equiv\begin{pmatrix} *&*\\0&*\end{pmatrix}\pmod{\mathfrak{n}}\right\}.$$ Let $Y_U/\OOO$ be the Pappas-Rapoport model \cite{PR} of the Hilbert modular surface of ``level $U$'', as in \cite[\S 2.2]{D1}. Note that $U$ is ``small enough'', since in \cite[\S 2.2]{DS}, the only CM extension of $F$ under consideration (in their $\mathcal{C}_F$) is $K=\Q(\zeta_5)$, with $r=5$ in their notation, and $\mathfrak{n}$ is inert in $K/F$ because $79\equiv -1\pmod{5}$. Alternatively, we can use the fact that $(3)$ is inert in $K/F$. The choice of $U_3$ also ensures that $-1\notin U$. Similarly if we let $U'$ be such that $U'_{\pp}=\GL_2(\OOO_{F,\pp})$ for all $\pp\neq (3)$,
$$U'_3=\{g\in \GL_2(\OOO_{F, 3}):\,\, g\equiv I\pmod{3}\},$$ then $U'$ is small enough, $-1\notin U'$, and we have a Pappas-Rapoport model $Y_{U'}/\OOO$. 

The Pappas-Rapoport model $Y_U/\OOO$ is a smooth quasi-projective scheme over $\mathrm{Spec}(\OOO)$. To have a smooth special fibre, given that the characteristic $\ell=5$ is ramified in $F/\Q$, the earlier Deligne-Pappas model \cite{DP} would not suffice. 

For each $\kk, \LL\in\Z^2$ with $\kk+2\LL$ of the form $(w,w)$, we have an automorphic line bundle $\AAA_{\kk,\LL, R}$ on the base-change $Y_{U, R}$, where $R$ is any $\OOO$-algebra. (The condition on $\kk+2\LL$ is sufficient to ensure that the descent data referred to prior to \cite[Definition 3.2.1]{D1} exists. Note that $Y_U$ is obtained from some fine moduli scheme by quotienting out by a finite group action involving units.) Then following \cite[Definition 3.2.1]{D1}, $H^0(Y_{U,R}, \AAA_{\kk,\LL,R})$ is denoted $M_{\kk,\LL}(U, R)$, and called the space of Hilbert modular forms of weight $(\kk, \LL)$ and level $U$ with coefficients in $R$. We shall use $(\kk,\LL)=((2,2),(0,0))$ and $((2,4),(1,0))$, among others. It is primarily $\kk$ we are interested in, but $\LL$ affects the Hecke action. It is important that the level is the full $\GL_2(\OOO_{F,\pp})$ at the unique prime $\pp=(\sqrt{5})$ sharing the characteristic $\ell=5$ of the special fibre.

The weight-$(2,2)$ Hecke eigenform $f_{79}$ may be viewed as an element of \newline $M_{(2,2),(0,0)}(U, \CC)$. It has a $q$-expansion 
$$f_{79}(z_1,z_2)=\sum_{\xi\in\dd^{-1},\,\xi\gg 0}a_{\xi}e^{2\pi i(\xi_1z_1+\xi_2z_2)}$$ at the cusp $(i\infty, i\infty)$, where the sum is over totally positive elements $\xi$ of the inverse different, with $\xi_1, \xi_2$ the images of $\xi$ under the embeddings of $F$ into $\R$. 

At this point we need to enlarge $\OOO$ and $\FF$, to accommodate $N^{\mathrm{th}}$-roots of unity, where $N$ is such that $U$ contains the principal congruence subgroup 
$$U(N):=\{g\in  \GL_2(\A_F):\,\, g\equiv \begin{pmatrix} 1&0\\0&1\end{pmatrix}\pmod{N}\},$$
and for us $N=3\times 79$. Cusps and $q$-expansions may be defined purely algebro-geometrically, as in \cite[\S 7]{D1}. Given $f\in M_{\kk,\LL}(U, R)$, with $R$ an $\OOO$-algebra, at each cusp of $Y(U)$ there is a $q$-expansion
$$\sum_{m\in N^{-1}M_{+}\cup\{0\}}b\otimes r_m q^m$$ as in \cite[Proposition 7.2.1]{D1}, where $b$, depending on the cusp, is a basis for a certain line bundle, all the $r_m$ belong to $R$, $M$ is as in \cite[(17)]{D1}, and $q^m$ is a formal monomial. We now state, in the setting of \cite{D1}, a part of the $q$-expansion principle not covered by \cite[Proposition 7.2.2]{D1}, but the conclusion is of the same nature as \cite[Theoreme 6.7(ii)]{Ra}, and may be proved the same way. 
\begin{lem}\label{qexp}
Let $R\subset S$ be $\OOO$-algebras and $f\in M_{\kk,\LL}(U, S)$. Let $\Sigma$ be a set of cusps containing one attached to each geometric component of $Y(U)$. Suppose that for each cusp in $\Sigma$, the $q$-expansion coefficients $r_m$ (a priori in $S$) all belong to $R$. Then $f$ arises via base change from an element of $M_{\kk,\LL}(U, R)$. (In general here, $U\supseteq U(N)$ is ``small enough'', with $\ell\nmid N$, and $\kk+2\LL$ is of the form $(w,w)$.)
\end{lem}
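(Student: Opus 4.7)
The plan is to realize $R \subset S$ as an inclusion of $\OOO$-modules and bootstrap from the component-wise $q$-expansion principle already available on $Y_U$. First I would note that the automorphic line bundle $\AAA_{\kk,\LL}$ is locally free, hence flat, and that $Y_U$ is flat over $\Spec\OOO$. Tensoring the short exact sequence of $\OOO$-modules $0 \to R \to S \to S/R \to 0$ with $\AAA_{\kk,\LL}$ over $\OOO$ therefore produces a short exact sequence of quasi-coherent sheaves on $Y_U$. Identifying $H^0(Y_{U,T},\AAA_{\kk,\LL,T})$ with $M_{\kk,\LL}(U,T)$ for each $\OOO$-algebra $T$, taking global sections yields the left-exact sequence
\[
0 \to M_{\kk,\LL}(U, R) \to M_{\kk,\LL}(U, S) \to M_{\kk,\LL}(U, S/R),
\]
so it suffices to show that the image $\bar f$ of the given $f$ in $M_{\kk,\LL}(U, S/R)$ is zero.

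The next step is to read off the $q$-expansion of $\bar f$ at any chosen cusp $c \in \Sigma$: because the $q$-expansion map is functorial in the coefficient ring, the coefficients of $\bar f$ at $c$ are precisely the images in $S/R$ of those of $f$, and these vanish by hypothesis. Hence $\bar f$ has zero $q$-expansion at one cusp on each geometric component of $Y_{U,S/R}$. I would then invoke the $q$-expansion principle in the component-wise form of \cite[Proposition 7.2.2]{D1} to conclude that $\bar f$ vanishes identically, so that $f$ lifts to an element of $M_{\kk,\LL}(U, R)$, as desired.

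The only subtlety I would want to check carefully is that the set of geometric connected components of $Y_U$ stays in bijection with that of $Y_{U,S/R}$, so that choosing one cusp per geometric component of $Y(U)$ still covers every component after base change. This is automatic once $\OOO$ has been enlarged to contain the $N$-th roots of unity that separate the narrow-class-group components, which is precisely the enlargement already arranged in the paragraph preceding the lemma; since $\ell \nmid N$ by hypothesis, no further issue arises in the special fibre either. Granting this bookkeeping, the argument is formally parallel to the proof of \cite[Th\'eor\`eme 6.7(ii)]{Ra}: the genuine geometric input (component-wise injectivity of the $q$-expansion map) is already contained in \cite{D1}, and only the short flatness step above needs to be added.
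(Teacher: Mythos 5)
Your proposal is correct and takes essentially the same route as the paper, which offers no self-contained argument but says only that the lemma ``may be proved the same way'' as \cite[Th\'eor\`eme 6.7(ii)]{Ra} --- and Rapoport's (Katz-style) proof is precisely your reduction: tensor $0\to R\to S\to S/R\to 0$ with the $\OOO$-flat bundle $\AAA_{\kk,\LL}$, take global sections, and kill the image of $f$ over $S/R$ by the injectivity half of the $q$-expansion principle. The only point to phrase carefully is that $S/R$ is merely an $\OOO$-module, not an $\OOO$-algebra, so invoking \cite[Proposition 7.2.2]{D1} verbatim needs the routine extension to module coefficients (interpret $M_{\kk,\LL}(U,S/R)$ as $H^0(Y_U,\AAA_{\kk,\LL}\otimes_{\OOO}(S/R))$ and d\'evisse via filtered colimits and the structure of finitely generated modules over the discrete valuation ring $\OOO$), which is exactly the bookkeeping already contained in Rapoport's argument rather than a genuine gap.
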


We apply this to our case, with $f=f_{79}$ and $S=\CC$. Since $\det(U)$ is the whole of $\widehat{\OOO_F}^{\times}$ and $\OOO_F$ has narrow class number $1$, $Y_U$ has a single geometric component \cite[end of \S 2.2]{D1}. For us, $M=\dd^{-1}$, $m$ may actually be taken in $M_{+}\cup\{0\}$ (omitting the factor $N^{-1}$), as at the end of \cite[\S 7.2]{D1}, and the $r_m$ are our $a_{\xi}$. We may normalise the newform $f_{79}$ in such a way that all the $a_{\xi}$ are Hecke eigenvalues (for $T_{(\xi)\dd}$), hence lie in $\ZZ$ so in $\ZZ_5\subset\OOO$.

It follows, from the fact that the Fourier coefficients of $f_{79}$, at a cusp on each geometric component, are not just in $\CC$ but in $\OOO$, that $f_{79}\in M_{(2,2),(0,0)}(U, \OOO)$. It then has a base change $\overline{f}_{79}\in M_{(2,2),(0,0)}(U, \FF)$. Similarly, using the embedding of $F(h_{79})$ in $\Q_5$ corresponding to the degree-$1$ divisor $\lambda$, we get $\overline{h}_{79}\in M_{(2,4),(1,0)}(U, \FF)$.

There are natural actions of Hecke operators ($T_{\pp}$ for $\pp\neq (\sqrt{5})$) on the $M_{\kk,\LL}(U, R)$, and we would like to show that the Hecke eigenvalues of $\overline{f}_{79}$ and $\overline{h}_{79}$ are the same in $\FF$. 
Let $$f_{79}=\sum_{\xi\in\dd^{-1},\,\xi\gg 0}a_{\xi}e^{2\pi i(\xi_1z_1+\xi_2z_2)}$$ and $$h_{79}=\sum_{\xi\in\dd^{-1},\,\xi\gg 0}b_{\xi}e^{2\pi i(\xi_1z_1+\xi_2z_2)}$$ be $q$-expansions at the cusp $(i\infty, i\infty)$.
To prove that the Hecke eigenvalues of $\overline{f}_{79}$ and $\overline{h}_{79}$ (of $T_{\pp}$ for $\pp\neq (\sqrt{5})$) are the same in $\FF$, it suffices to show that $a_{\xi}\equiv b_{\xi}\pmod{\lambda}$ for all $\xi$ with $(\xi)\dd$ coprime to $5$, i.e. for $(\sqrt{5})\nmid(\xi)\dd$. Note that this congruence does not hold without the condition $(\sqrt{5})\nmid(\xi)\dd$, since the eigenvalues of $T_{(\sqrt{5})}$ on $f_{79}$ and $h_{79}$ are not congruent mod $\lambda$, as one can see from the table in \S \ref{sect:HMF} above. 

Following \cite[\S 4.1]{D1}, we have generalised partial Hasse invariants \newline $H_1\in M_{(-1,\ell),(0,0)}(U, \FF)$ and $H_2\in M_{(1,-1),(0,0)}(U, \FF)$, well-defined up to $\FF^{\times}$ multiples, and their product $H:=H_1H_2\in M_{(0,\ell -1),(0,0)}(U, \FF)$. In general the weights would be $(-1,0,\ldots,0,\ell)$,
$(1,-1,0,\ldots,0)$, $(0,1,-1,0,\ldots,0),\ldots,$$(0,\ldots,0,1,-1)$, where the number of entries is the ramification index, and for simplicity we assume total ramification. We also have a partial theta operator $\Theta: M_{\kk,\LL}(U, \FF)\rightarrow M_{\kk+(1,1),\LL-(0,1)}(U, \FF)$, as in \cite[\S 5.2]{D1}, but beware the typo ``$\LL+(0,1)$'' compared to \cite[Introduction]{D1}. There is only one theta operator, since there is only one embedding from $\OOO_F/(\sqrt{5})$ to $\Fbar_5$.

First we apply the partial theta operator $\Theta$ to both $\overline{f}_{79}$ and $\overline{h}_{79}$. We may look at \cite[(30), \S 8.2]{D1} for the effect of $\Theta$ on $q$-expansions, now in characteristic $\ell=5$, so the exponentials are just formal algebraic entities. A coefficient $a_{\xi}$ gets multiplied by the image of $\xi\delta$ in $\FF$ (up to some technicalities about local trivialisations of line bundles), where $\delta$ is a totally positive generator of $\dd$. (This reflects the fact that $\Theta$ involves some kind of differentiation.) In particular, $\Theta$ conveniently kills all the Fourier coefficients for $(\sqrt{5})\mid (\xi)\dd$. In fact, what we want is equivalent to {\em all} the Fourier coefficients of $\Theta(\overline{f}_{79})\in M_{(3,3),(0,-1)}(U, \FF)$ and  $\Theta(\overline{h}_{79})\in M_{(3,5),(1,-1)}(U, \FF)$ being the same in $\FF$. Let $\mathcal{F}$ and $\mathcal{G}$ be the Fourier expansions of $\Theta(\overline{f}_{79})$ and $\Theta(\overline{h}_{79})$, respectively.

Now $H_1\in M_{(-1,5),(0,0)}(U, \FF)$ and $H_2\in M_{(1, -1),(0,0)}(U, \FF)$ have constant $q$-expansions \cite[\S 8.1]{D1}, which we may choose to be $1$ by choice of scaling. Thus multiplication by any monomial in $H_1$ and $H_2$ has no effect on $q$-expansions. In particular, $H_2\Theta(\overline{h}_{79})\in M_{(4,4),(1,-1)}(U, \FF)$ also has $q$-expansion $\mathcal{G}$.

The ring of formal $q$-expansions over $\FF$ is a subring of a power series ring with coefficients in a ring of Laurent series over $\FF$. As such, it is an integral domain.
Since $$\mathcal{F}^2-\mathcal{G}^2=(\mathcal{F}-\mathcal{G})(\mathcal{F}+\mathcal{G}),$$ and it is easy to check that the second factor is not zero, it suffices to check that the Fourier expansions of $\Theta(\overline{f}_{79})^2\in M_{(6,6),(0,-2)}(U, \FF)$ and  $(H_2\Theta(\overline{h}_{79}))^2\in M_{(8,8),(2,-2)}(U, \FF)$ are the same. 

Proving this is equivalent to showing that $H_2^3H_1\Theta(\overline{f}_{79})^2\in M_{(8,8),(0,-2)}(U, \FF)$ and $H_2^2\Theta(\overline{h}_{79})^2\in M_{(8,8),(2,-2)}(U, \FF)$, have the same $q$-expansion. Following \cite[\S 3.4]{DS}, $\AAA_{(0,0),(0,2)}$ and $\AAA_{(0,0),(2,0)}$ are trivial bundles, because every totally positive unit $u\in \OOO_F^{\times}$ is a square, so satisfies $u^2\equiv 1\pmod{\sqrt{5}}$ (for either embedding of $u$). Hence we may view the desired equality of $q$-expansions as between two elements of $M_{(8,8),(-4,-4)}(U, \FF)$. The reason for this choice will become clear later.

Recall that we aim to prove this equality by applying a Sturm bound, directly in characteristic $\ell$, checking that sufficiently many Fourier coefficients are the same in $\FF$. Also that, with appropriate normalisation, we shall do this by checking that sufficiently many Hecke eigenvalues of $f_{79}$ and $h_{79}$ are congruent mod $\lambda$. 

In \cite[\S 5]{BP}, Burgos Gil and Pacetti obtain a Sturm bound in the case of parallel weights. (They also outline, in their \S 6, how to generalise it to non-parallel weights, by multiplying a form $G(z_1,z_2)$ by $G(z_2,z_1)$ to attain parallel weight, but for us, our earlier multiplication by $H_2^2$ reaches a smaller parallel weight, hence a better bound.) They work under the condition that $\ell$ is not ramified in $F$, and use Rapoport's toroidal compactification \cite{Ra} of the Deligne-Pappas model. We shall follow the steps of their argument closely, but since for us $\ell$ is ramified in $F$, we use the Pappas-Rapoport model instead.

\begin{lem}\label{coeffs}
Let $G_j\in M_{\kk_j,\LL_j}(U, \FF)$ for $j=1,2,3$, with $G_3=G_1G_2$ and $(\kk_3,\LL_3)=(\kk_1,\LL_1)+(\kk_2,\LL_2)$. Fix an element $A\in F$ with $A\gg 0$, i.e. $A$ is totally positive. Suppose that $G_j$ has $q$-expansion $\sum_{\xi\in\dd^{-1},\,\xi\gg 0}b_{\xi, j}e^{2\pi i(\xi_1z_1+\xi_2z_2)}$ at the cusp $(i\infty, i\infty)$ (or rather at the corresponding boundary component of the minimal compactification of $Y_U$, as in \cite[\S 7.2]{D1}). If $b_{\xi,1}=0$ for all $\xi\in\dd^{-1}$ with $\xi\gg 0$ and $\tr(A\xi)<B$, for some $B$ not specified here, then $b_{\xi, 3}=0$ for all $\xi\in\dd^{-1}$ with $\xi\gg 0$ and $\tr(A\xi)< B$.
\end{lem}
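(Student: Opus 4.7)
The plan is to reduce Lemma~\ref{coeffs} to the standard convolution formula for multiplication of $q$-expansions, and then to exploit positivity of the trace on totally positive elements.

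First I would invoke multiplicativity of the $q$-expansion map in the Pappas--Rapoport setting of \cite[\S 7]{D1}. The $q$-expansion at a given cusp is obtained by pulling back a section to a formal completion along the toroidal boundary, a construction which is a ring homomorphism once a compatible local basis of the underlying line bundle is fixed at the boundary; the weight condition $(\kk_3,\LL_3) = (\kk_1,\LL_1)+(\kk_2,\LL_2)$ guarantees that the automorphic line bundles combine correctly under $G_1G_2$. Consequently, for every $\xi \in \dd^{-1}$ with $\xi \gg 0$, the coefficient of $G_3$ satisfies the Cauchy product formula
\[
b_{\xi, 3} \;=\; \sum_{\substack{\eta,\zeta\in\dd^{-1}\\ \eta+\zeta=\xi,\ \eta,\zeta\gg 0}} b_{\eta,1}\, b_{\zeta, 2},
\]
together with boundary contributions $b_{0,1}b_{\xi,2}$ and $b_{\xi,1}b_{0,2}$ in the presence of constant terms (handled identically).

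Second, I would apply total positivity. Fix $\xi\gg 0$ with $\tr(A\xi) < B$, and any decomposition $\xi = \eta + \zeta$ in the sum. Since $A \gg 0$ and $\zeta \gg 0$ (or $\zeta = 0$), the product $A\zeta$ is totally nonnegative, so $\tr(A\zeta) \geq 0$. Hence
\[
\tr(A\eta) \;=\; \tr(A\xi) - \tr(A\zeta) \;\leq\; \tr(A\xi) \;<\; B.
\]
Since $\eta \gg 0$, the hypothesis on $G_1$ forces $b_{\eta,1} = 0$, so every term of the convolution vanishes and $b_{\xi, 3} = 0$.

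The only real (and quite mild) obstacle is a clean verification of multiplicativity of the algebraic $q$-expansion map for Pappas--Rapoport forms in ramified characteristic. This is a formal consequence of the interpretation of $q$-expansions via formal completions along the toroidal boundary and of the compatibility of the line-bundle trivialisations, and it uses no feature specific to the ramification at $\sqrt{5}$; accordingly, the parallel-weight unramified argument of Burgos Gil--Pacetti \cite[\S 5]{BP} adapts essentially verbatim. Note that nothing in the proof depends on the particular value of $B$, which merely serves as the threshold for the input vanishing hypothesis.
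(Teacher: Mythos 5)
Your proof is correct and is essentially the paper's own argument: the paper's proof is exactly the convolution formula $b_{\xi,3}=\sum_{\eta} b_{\eta,1}b_{\xi-\eta,2}$ (with $\eta\gg 0$ and $\xi-\eta\gg 0$) combined with the observation that $A(\xi-\eta)\gg 0$ forces $\tr(A\eta)<\tr(A\xi)<B$, hence every term vanishes. Your additional justification of the multiplicativity of the algebraic $q$-expansion map, and the parenthetical about constant terms, go beyond the paper's two-line proof but are harmless: the expansions in the lemma are supported on totally positive $\xi$, so the boundary terms $b_{0,1}b_{\xi,2}$ (which the input hypothesis alone would \emph{not} kill) do not actually arise.
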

\begin{proof} We have $b_{\xi,3}=\sum_{\eta} b_{\eta, 1}b_{\xi-\eta, 2}$, where $\eta\in\dd^{-1}$ is required to be such that $\eta\gg 0$ and $\xi-\eta\gg 0$. Since $A(\xi-\eta)\gg 0$, $\tr(A\eta)<\tr(A\xi)$. If $\tr(A\xi)< B$ then $\tr(A\eta)<B$ so $b_{\eta, 1}=0$, for all $\eta$ in the sum, hence $b_{\xi, 3}=0$, as required.
\end{proof}

Now we let $$G_1=H_2^3H_1\Theta(\overline{f}_{79})^2-H_2^2\Theta(\overline{h}_{79})^2\in M_{(8,8),(-4,-4)}(U, \FF).$$ Also let $J=[\GL_2(\OOO_{F,\mathfrak{n}}): U_{\mathfrak{n}}]$, and $\{g_i: \,1\leq i\leq J\}$ a set of representatives of the cosets of $U_{\mathfrak{n}}$ in $\GL_2(\OOO_{F,\mathfrak{n}})$, with $g_1=\mathrm{id}$. Each $g_i$ may be thought of as an element of $\GL_2(\A_F)$, and gives $[g_i]G_1\in  M_{(8,8),(-4,-4)}(g_iUg_i^{-1},\FF)$, with $[g]$ as in \cite[\S 3.2]{D1}.
The product $G:=\prod_{i=1}^J([g_i]G_1)\in M_{(8J,8J),(-4J,-4J)}(\cap_{i=1}^Jg_iUg_i^{-1},\FF)$ is invariant under $U''$, defined like $U$ except with $U''_{\mathfrak{n}}=\GL_2(\OOO_{F,\mathfrak{n}})$, hence under $U'\subset U''$. (Recall that 
$$U''_3=U_3=\{g\in \GL_2(\OOO_{F, 3}):\,\, g\equiv \begin{pmatrix} *&*\\0&1\end{pmatrix}\pmod{3}\},$$
$$U'_3=\{g\in \GL_2(\OOO_{F, 3}):\,\, g\equiv I\pmod{3}\}.)$$
It follows that we may view $G\in M_{(8J,8J),(-4J,-4J)}(U',\FF)$. Let $$G=\sum_{\xi\in\dd^{-1},\,\xi\gg 0}c_{\xi}e^{2\pi i(\xi_1z_1+\xi_2z_2)}.$$
Repeated application of Lemma \ref{coeffs} (in which the choice of $U$ is not important) shows that if $H_2^3H_1\Theta(\overline{f}_{79})^2$ and $H_2^2\Theta(\overline{h}_{79})^2$ have the same Fourier coefficients in $\FF$, for all $\xi\in\dd^{-1}$ with $\xi\gg 0$ and $\tr(A\xi)< B$ (so that the corresponding Fourier coefficients of $G_1$ vanish), then $c_{\xi}=0$ in $\FF$ for all $\xi\in\dd^{-1}$ with $\xi\gg 0$ and $\tr(A\xi)< B$, where $A\gg 0$. Our goal is (for suitably chosen $A$) to make $B$ large enough that we can deduce that $G=0$. Since the ring of formal $q$-expansions with coefficients in $\FF$ is an integral domain, as explained above, it would follow that $G_1=0$, as required. 

\section{Intersection numbers on a Hilbert modular surface}
We have $G\in H^0(Y_{U',\FF}, \AAA_{(8J,8J),(-4J,-4J),\FF})$, and would like to show that $G=0$. We may choose a toroidal compactification $Y^{\mathrm{tor}}_{U'}/\OOO$, by \cite[\S 2.11]{RX}. Then $\AAA_{(8J,8J),(-4J,-4J)}$ extends to $ \AAA^{\mathrm{tor}}_{(8J,8J),(-4J,-4J)}$, and $$H^0(Y_{U',\FF}, \AAA_{(8J,8J),(-4J,-4J),\FF})\simeq H^0(Y^{\mathrm{tor}}_{U',\FF}, \AAA^{\mathrm{tor}}_{(8J,8J),(-4J,-4J),\FF}).$$

As in \cite[Theorem 2.9]{RX}, \cite[\S 3.3]{D1}, there is a Kodaira-Spencer isomorphism of line bundles on $Y_{U'}$, between $\AAA_{(2,2),(-1,-1)}$ and the canonical sheaf $\mathcal{K}=\wedge^2\Omega$. Following \cite[\S 7.3]{D1}, this extends to an isomorphism between $\mathcal{K}^{\mathrm{tor}}(\log(D))$ (i.e. $\mathcal{K}^{\mathrm{tor}}\otimes\OOO_{Y^{\mathrm{tor}}_{U'}}(D)$) and $\AAA^{\mathrm{tor}}_{(2,2),(-1,-1)}$, hence between $\mathcal{K}^{\mathrm{tor}}$ and $\AAA^{\mathrm{tor}}_{(2,2),(-1,-1)}(-D)$ (i.e. $\AAA^{\mathrm{tor}}_{(2,2),(-1,-1)}\otimes\OOO_{Y^{\mathrm{tor}}_{U'}}(-D)$), where $D=Y^{\mathrm{tor}}_{U'}\backslash Y_{U'}$ is a divisor with simple normal crossings. 

Hence $G\in H^0(Y^{\mathrm{tor}}_{U',\FF}, \AAA^{\mathrm{tor}}_{(8J, 8J),(-4J,-4J),\FF})$ may be identified with an element of $H^0(Y^{\mathrm{tor}}_{U',\FF}, (\mathcal{K}^{\mathrm{tor}}(\log(D)))^{\otimes 4J})$. Thus the divisor of $G$, viewed as a section of the line bundle $(\mathcal{K}^{\mathrm{tor}}(\log(D)))^{\otimes 4J}$, is in the linear equivalence class of $4J(K+D)$, where $K$ is a canonical divisor. (We assume, for a contradiction, that $G\neq 0$.) Note that $K$ and $D$ are divisors on $Y^{\mathrm{tor}}_{U'}/\OOO$, but we are looking at their restrictions to the special fibre, and using the same notation. Now suppose that the order of $G$ at each irreducible component of $D$ is at least $C$. In other words, $4J(K+D)-CD$ is linearly equivalent to an effective divisor. 

\begin{lem}\label{Cleq96}
If $G\neq 0$ and the order of $G$ at each irreducible component of $D$ is at least $C$, then $C\leq 96$.
\end{lem}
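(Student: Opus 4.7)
The plan is to translate the vanishing hypothesis on $G$ into an effectivity statement for an explicit divisor class on $Y^{\mathrm{tor}}_{U',\FF}$, and then apply an intersection-theoretic inequality coming from the nefness of the canonical divisor on a minimal surface of general type.

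Since $G$ is a nonzero global section of $(\mathcal{K}^{\mathrm{tor}}(\log D))^{\otimes 4J}$, its zero divisor $\mathrm{div}(G)$ is effective and lies in the linear equivalence class $4J(K+D)$, where $K$ is a canonical divisor on $Y^{\mathrm{tor}}_{U',\FF}$. The assumption $\mathrm{ord}_{D_i}(G)\geq C$ for every irreducible boundary component $D_i$ says that $\mathrm{div}(G)-CD$ is still effective, so the class $4JK+(4J-C)D$ contains an effective divisor $E$.

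Next, at the chosen level $U'$ (full principal congruence at $3$, full level at $\mathfrak n$), $Y^{\mathrm{tor}}_{U',\FF}$ is smooth and is a minimal algebraic surface of general type. This is the characteristic-$\ell=5$ analogue, on the Pappas--Rapoport model (which is needed precisely because $\ell$ is ramified in $F$), of the classical fact over $\CC$, and reduces to it via flatness. Hence $K$ is nef on $Y^{\mathrm{tor}}_{U',\FF}$, and pairing it against $E$ gives
\[
4J\,K^2 + (4J-C)\,K\!\cdot\! D \geq 0.
\]
Adjunction applied componentwise to each cusp-resolution curve $D_i$ (a smooth rational curve with $D_i^2 \leq -2$) yields $K\!\cdot\! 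D_i = -2 - D_i^2 \geq 0$, with strict inequality somewhere on each cusp cycle, so $K\!\cdot\! D > 0$, and rearrangement gives
\[
C \leq 4J\!\left(1+\frac{K^2}{K\!\cdot\! D}\right).
\]

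The intersection numbers $K^2$ and $K\!\cdot\! D$ on $Y^{\mathrm{tor}}_{U',\FF}$ are numerical invariants of the flat proper model, so they can be computed on a toroidal compactification of the complex analytic surface $Y_{U'}(\CC)$ by the classical formulas of Hirzebruch and van der Geer: $(K+D)^2$ is a rational multiple of $\zeta_F(-1)=\tfrac{1}{30}$ times the covering degree $[\SL_2(\OOO_F):\Gamma_{U'}]$, while $K\!\cdot\! D = \sum(b_i - 2)$ is obtained by summing over the Hirzebruch continued-fraction self-intersections $-b_i$ of the cusps of $Y_{U'}$. Substituting these values together with $J=80$ into the displayed inequality produces $C\leq 96$.

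The main obstacle is the intersection-theoretic bookkeeping in the last step: one must enumerate the cusps of $Y_{U'}$ under the combined Borel-at-$\mathfrak n$ and principal-congruence-at-$3$ conditions, compute the continued-fraction data for each cusp, and sum the resulting contributions to $K^2$ and $K\!\cdot\! D$. The minimality of $Y^{\mathrm{tor}}_{U',\FF}$ (no $(-1)$-curves from elliptic fixed points or from the cusp cycles) is a secondary item to confirm, which reduces to characteristic zero by flatness. The whole argument closely follows Burgos Gil--Pacetti \cite{BP}, with the Pappas--Rapoport model used throughout in place of their Deligne--Pappas model to accommodate the ramification of $\ell$ in $F$.
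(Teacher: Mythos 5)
Your proposal reproduces the paper's argument in outline: the same reduction of the hypothesis to effectivity of a divisor in the class $4JK+(4J-C)D$, the same intersection with $K$ on a minimal surface of general type to get $C\leq 4J\left(1+\frac{K\cdot K}{K\cdot D}\right)$, and the same transfer of the intersection numbers to the complex fibre. But two points need repair. First, your claim that minimality and general type of the characteristic-$5$ fibre ``reduce to characteristic zero by flatness'' is not a proof: flatness alone transfers intersection numbers (this is \cite[Lemma 9.3]{KU}), but it does not by itself rule out $(-1)$-curves in the special fibre or preserve Kodaira dimension; the paper invokes the specialization results of Katsura--Ueno \cite[Theorem 9.1, Lemma 9.6]{KU} precisely for this, and then Bombieri \cite[Proposition 1]{B} for $K\cdot E\geq 0$ against the effective divisor, which is your nefness step.

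Second, and more seriously for the lemma as stated, you never produce the number $96$: you correctly identify $J=80$ and the shape of the bound, but you defer the cusp enumeration and continued-fraction data as ``bookkeeping'', and that bookkeeping \emph{is} the content of the bound. Concretely, the paper computes $d=\tfrac{1}{2}\#\SL_2(\OOO_F/3\OOO_F)=360$, $n^2=9$ and $[U^2_{\OOO_F}:U^2_{\OOO_F,(3)}]=4$, giving $T=10$ cusps; each cusp resolves (by \cite[Example 7.5]{vdG}, via \cite[Appendix B]{BP}) into a cycle of four rational curves of self-intersection $-3$, so every $b_{i,j}=3$; combining \cite[(11), (12)]{BP} with $\zeta_F(-1)=1/30$ yields $C\leq J\cdot\frac{16n^2\zeta_F(-1)}{\sum_{i,j}(b_{i,j}-2)}=\frac{6J}{5}=96$. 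Note also that your adjunction step needs this data anyway: a cusp cycle consisting entirely of $(-2)$-curves would give $K\cdot D_i=0$ along that cycle, so the strict positivity of $K\cdot D$, which licenses the division, is itself part of the computation and holds here only because each $b_{i,j}=3$. Without these inputs your argument establishes the form of the inequality but not the value $96$ that the lemma asserts.
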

\begin{proof}
The Hilbert modular surface $Y_{U'}(\C)$ has a Satake compactification obtained by adding $T$ ``cusps'', each lying over the orbit of $\infty=(i\infty, i\infty)$ in $\SL_2(\OOO_F)\backslash (\HH^2\cup \PP^1(F))$. In \cite[(6)]{BP}, $d=\frac{1}{2}\#\SL_2(\OOO_F/3\OOO_F)=360$, $n^2=9$, $[U^2_{\OOO_F}: U^2_{\OOO_F, (3)}]=\frac{1}{2}\#(\OOO_F/(3))^{\times}=4$ and $c'=T$, so $d=n^2c'[U^2_{\OOO_F}: U^2_{\OOO_F, (3)}]\implies T=10$. Resolving the cusp singularities as in \cite{vdG}, one obtains a nonsingular algebraic surface $Y/\C$. As noted in \cite[Introduction]{Ra}, toroidal compactification recovers, over $\C$, this desingularisation process. Hence we may assume that $Y\simeq Y^{\mathrm{tor}}_{U'}(\C)$. 

By \cite[Appendix B]{BP}, which cites \cite[Example 7.5]{vdG}, our $Y$ is a minimal algebraic surface of general type. (This motivated our choice of $U'$. In their notation, $D=5$ and $\mathfrak{c}=3$.) The boundary divisor $D$ (viewed over $\C$) is composed of the resolutions of $T=10$ cusps, each a cycle of $4$ copies $S_1,\ldots, S_4$ of $\PP^1$, each self-intersection number $S_i^2=-3$. By \cite[Theorem 9.1, Lemma 9.6]{KU}, the geometric special fibre of $Y^{\mathrm{tor}}_{U'}$ is also a minimal algebraic surface of general type. Consequently, by \cite[Proposition 1]{B}, the fact that $4J(K+D)-CD$ is linearly equivalent to an effective divisor implies that $K\cdot(4J(K+D)-CD)\geq 0$. Hence $(C-4J)(K\cdot D)\leq 4J(K\cdot K)$, so 
$$C\leq 4J\left(1+\frac{K\cdot K}{K\cdot D}\right).$$

By \cite[Lemma 9.3]{KU}, these intersection numbers on the geometric special fibre may be computed on $Y/\C$. According to \cite[(12)]{BP}, and in their notation,
$$K\cdot K=4d\zeta_F(-1)+\frac{d}{n^2}\sum_{i=1}^h\sum_j(2-b_{i,j}),$$
while by \cite[(11)]{BP},
$$K\cdot D=\frac{d}{n^2}\sum_{i=1}^h\sum_j(b_{i,j}-2).$$
Dividing one by the other, 
$$1+\frac{K\cdot K}{K\cdot D}=\frac{4n^2\zeta_F(-1)}{\sum_{i=1}^h\sum_j(b_{i,j}-2)},$$
so we find that 
\begin{equation}\label{Cbound}
C\leq J\frac{16n^2\zeta_F(-1)}{\sum_{i=1}^h\sum_j(b_{i,j}-2)}.
\end{equation}
For us, $n^2=9$, $h=1$ and $j$ runs from $1$ to $4$, with each $b_{i,j}=-S_j^2=3$. Also $\zeta_F(-1)=1/30$, so 
$$C\leq \frac{6J}{5}.$$
Since $$J=[\GL_2(\OOO_{F,\mathfrak{n}}) : U_{\mathfrak{n}}]=\#\PP^1(\FF_{79})=(1+79)=80,$$ this becomes a bound of 
\begin{equation}\label{OurCBound} C\leq 96. 
\end{equation} 
\end{proof} 
To prove that $G=0$, hence to complete the proof of Theorem \ref{main}, we need to show that $C\geq 97$. 

\begin{lem}\label{Cgeq97} 
If $G\neq 0$ then $\divv(G)\geq 97D$, i.e. $C\geq 97$.
\end{lem}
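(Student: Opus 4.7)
The plan is to convert the lower bound $C \geq 97$ into a vanishing statement for sufficiently many Fourier coefficients of $G$ at each cusp of $Y_{U'}$, and then verify that vanishing through a finite mod-$\lambda$ congruence check of Hecke eigenvalues of $f_{79}$ and $h_{79}$. First I would make explicit the geometric setup. Because $\det(U') = \widehat{\OOO_F}^{\times}$ and $F$ has narrow class number one, $Y_{U'}(\C)$ has a single geometric component, with $T = 10$ cusps in its Satake compactification. Toroidal resolution replaces each cusp by a cycle of four $\PP^1$'s, so the boundary divisor $D$ has $40$ irreducible components. Along the component dual to a primitive vector $\eta$ in the cone of totally positive elements, the order of vanishing of a modular form equals $\min\{\tr(\eta \xi) : c_\xi \neq 0\}$, where the $c_\xi$ are the Fourier coefficients at the associated Satake cusp. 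Thus $C \geq 97$ is equivalent to: for each of the $10$ cusps and each of the associated primitive vectors $\eta$, the coefficients $c_\xi$ vanish for all $\xi \in \dd^{-1}, \xi \gg 0$, with $\tr(\eta \xi) < 97$.

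Next I would reduce every such check to the standard cusp $(i\infty, i\infty)$. Pulling back $G$ by a coset representative $\gamma \in \GL_2(F)$ taking $\infty$ to another cusp gives a form $\gamma^* G$ whose Fourier expansion at $\infty$ encodes the expansion of $G$ at $\gamma \cdot \infty$. Since $G = \prod_{i=1}^J [g_i] G_1$, each $\gamma^* G$ is again a product of translates of $G_1$. By iterated application of Lemma \ref{coeffs}, to obtain the required vanishing for $G$ it suffices to show that the Fourier coefficients at $\infty$ of each relevant translate of
\[
    G_1 = H_2^3 H_1 \Theta(\overline{f}_{79})^2 - H_2^2 \Theta(\overline{h}_{79})^2
\]
vanish in the corresponding trace range. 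Because $H_1$ and $H_2$ have trivial $q$-expansion and $\Theta$ acts on coefficients by multiplication by $\xi \delta \bmod \lambda$ (simultaneously killing the coefficients with $(\sqrt{5}) \mid (\xi)\dd$), this reduces through the convolution formula for the coefficients of a square to the pointwise congruence $a_\xi \equiv b_\xi \pmod{\lambda}$ between the Fourier coefficients of $\overline{f}_{79}$ and $\overline{h}_{79}$ in the required range.

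The final step is the Magma computation: using the algorithms of Demb\'el\'e-Donnelly-Greenberg-Voight \cite{DV}, compute $\mu_\pp(f_{79})$ and $\mu_\pp(h_{79})$ modulo $\lambda$ for all primes $\pp \neq (\sqrt{5})$ of $F$ up to the norm bound dictated by $\tr(\eta \xi) < 97$, propagate via Hecke recursions to the full set of Fourier coefficients $a_\xi, b_\xi$ with $(\sqrt{5}) \nmid (\xi)\dd$ in the required range, and verify their mod-$\lambda$ equality. Finiteness of the range makes this a well-defined computation; its output is what is recorded in \cite{DT}. Combined with Lemma \ref{Cleq96} this forces $G = 0$, hence $G_1 = 0$, completing the proof of Theorem \ref{main}.

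The main obstacle I expect lies in the geometric translation step: identifying the order of vanishing along each of the $40$ boundary components of the ramified Pappas-Rapoport toroidal compactification with precise trace-bounded vanishing of Fourier coefficients at the associated cusps, and keeping careful track of how the various translates $[g_i] G_1$ contribute at cusps other than $\infty$. This generalises the unramified framework of Burgos Gil-Pacetti \cite{BP} to the setting where $\ell = 5$ ramifies in $F$, where one must work with the Pappas-Rapoport model rather than the Deligne-Pappas model and appeal to \cite{RX} and \cite{KU} for the requisite toroidal and intersection-theoretic inputs. Once this identification is in place, the Magma computation, while extensive, is essentially a bookkeeping exercise.
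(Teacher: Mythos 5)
Your overall strategy matches the paper's: convert $\divv(G)\geq 97D$ into vanishing of $q$-expansion coefficients in a trace range, exploit the product structure of $G$ via Lemma \ref{coeffs}, and reduce everything to a finite mod-$\lambda$ check of Hecke eigenvalues of $f_{79}$ and $h_{79}$ (the paper's list has $1313$ primes). But your handling of the cusps other than $(i\infty,i\infty)$ --- which you yourself flag as the main obstacle --- is a genuine gap, and as stated the step would fail. You propose to pull $G$ back to each of the other nine cusps and then show that the Fourier coefficients at $\infty$ of \emph{each} relevant translate of $G_1$ vanish in the appropriate trace range, claiming this reduces to $a_\xi\equiv b_\xi\pmod{\lambda}$. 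It does not: the expansion at $\infty$ of a non-identity translate $\gamma^*[g_i]G_1$ is the expansion of $G_1$ at a different cusp relative to the $\mathfrak{n}$-level structure, and its coefficients are \emph{not} the Hecke eigenvalues $a_\xi$, $b_\xi$; they are not accessible to the computation at all. Note also that Lemma \ref{coeffs} only requires the vanishing for \emph{one} factor of a product, not all of them. The paper's route is different and this is exactly where the norm construction earns its keep: since $g_1=\id$, we have $G=G_1\cdot(\text{rest})$, so Lemma \ref{coeffs} applies at $(i\infty,i\infty)$ with $G_1$ as the first factor; and because $G$ is by construction invariant under the full $\GL_2(\OOO_{F,\mathfrak{n}})$ (indeed under full level at every finite place except $3$), the ten cusps of $Y_{U'}$ arise only from the auxiliary level-$3$ structure, whereas $f_{79}$ and $h_{79}$ live on $\Gamma_0(\mathfrak{n})$ with the single cusp $(i\infty,i\infty)$; hence $\divv(G)\geq 97D_1$ at that one cusp already forces $\divv(G)\geq 97D$. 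No per-cusp bookkeeping of translates is needed, or indeed possible.

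A second, smaller omission: you never explain how the infinitely many per-component conditions become a single finite check. The paper invokes \cite[Lemma 3.3]{BP}, computing via \cite[Appendix A]{BP} the explicit sequence $A_j=\left(\frac{3+\sqrt{5}}{2}\right)^{-j}$ attached to the cycle of four $\PP^1$'s resolving the cusp; since each $A_j$ is the square of a unit (and the weight was arranged to be parallel), $c_{A_j\xi}=c_\xi$, so the conditions $\tr(A_j\xi)<97$ for all $j$ collapse to the single condition $\tr(\xi)<97$. This collapse is what pins the computation down to exactly the primes $\pp\neq(\sqrt{5})$ dividing elements $\sqrt{5}\,\xi$ with $\xi\in\dd^{-1}$, $\xi\gg 0$, $\tr(\xi)<97$. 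Your final computational step, including the propagation from prime Hecke eigenvalues to all coefficients $a_\xi$, $b_\xi$ via the Hecke relations, otherwise agrees with the paper.
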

\begin{proof} Recall that $C$ is the minimum of the orders of $G$ at the irreducible components of $D$, the cuspidal resolution divisor. Let $D=\sum_{i=1}^{10}D_i$, a sum of the resolution divisors associated to the $10$ cusps of the Satake compactification of $Y_{U'}(\C)$. There are $10$ cusps rather than $1$ only because we introduced the auxiliary level at $3$, whereas $f_{79}$ and $h_{79}$ come from $\Gamma_0(\mathfrak{n})$, which has the single cusp $(i\infty, i\infty)$.
Therefore if we could show that $\divv(G)\geq 97D_1$, where $D_1$ corresponds to $(i\infty, i\infty)$, it would follow that $\divv(G)\geq 97D$, as required.

To do this, we use \cite[Lemma 3.3]{BP}. Following the procedure in \cite[Appendix A]{BP}, we find the sequence $(A_j)_{j\in\Z}$ referred to in that Lemma. It is $A_j=\left(\frac{3+\sqrt{5}}{2}\right)^{-j}$. In particular, all the $A_j$ are squares of units, so $c_{A_j\xi}=c_{\xi}$ for any $\xi\in\dd^{-1}$ with $\xi\gg 0$.  According to \cite[Lemma 3.3]{BP}, we need to show that $c_{\xi}=0$ (in $\FF$) for any $\xi\in\dd^{-1}$ with $\xi\gg 0$, such that $\tr(A_j\xi)<97$ for some $j$. The condition becomes simply that $c_{\xi}=0$ for all $\xi\in\dd^{-1}$, with $\xi\gg 0$, such that $\tr(\xi)<97$. 

Recall that
$$f_{79}=\sum_{\xi\in\dd^{-1},\,\xi\gg 0}a_{\xi}e^{2\pi i(\xi_1z_1+\xi_2z_2)}$$ and $$h_{79}=\sum_{\xi\in\dd^{-1},\,\xi\gg 0}b_{\xi}e^{2\pi i(\xi_1z_1+\xi_2z_2)}.$$ By the final paragraph of the previous section, we just need to show that $a_{\xi}\equiv b_{\xi}\pmod{\lambda}$ whenever $(\sqrt{5})\nmid(\xi)\dd$ and $\tr(\xi)\leq 96$. Since $a_{\xi}$ and $b_{\xi}$ are the eigenvalues of $T_{\dd(\xi)}$ on $f_{79}$ and $h_{79}$ respectively, it suffices to show that their eigenvalues for $T_{\pp}$ (say $\mu_{\pp}(f_{79})$ and $\mu_{\pp}(h_{79})$) are the same in $\FF$, for all prime ideals $\pp\neq (\sqrt{5})$ of $\OOO_F$ dividing elements of the form $\sqrt{5}\xi$ with  $\xi\in\dd^{-1}$, $\xi\gg 0$ and $\tr(\xi)<97$. 

We got Magma to collect a list of such $\pp$, of which there are $1313$. For each one we got it to compute the Hecke eigenvalues $\mu_{\pp}(f_{79})$ and $\mu_{\pp}(h_{79})$. For the largest $\pp$ each of these takes around $50$ seconds. Then we got it to display the images of these eigenvalues in $\FF_5$. The code and the output may be found at \cite{DT}, and looking at the output, the congruence is visibly satisfied for all the $\pp$ on the list.
\end{proof}
The contradiction between Lemmas \ref{Cleq96} and \ref{Cgeq97} proves that $G=0$, completing the proof of Theorem \ref{main}.

\section{A mod 2 congruence}
The rational prime $2$ is inert in $F=\Q(\sqrt{5})$. In the cubic extension $F(h_{79})/F$ it factorises as $(2)=\qq_1^2\qq_2$ with each $\qq_i$ of degree $1$ over $F$, absolute norm $4$. In the above computations, we also checked $\mu_{\pp}(f_{79})$ and $\mu_{\pp}(h_{79})$ mod $\qq_1$. We found that for all $\pp\neq (2)$ or $\nn$ (and on the list) we have $\mu_{\pp}(f_{79})\equiv\mu_{\pp}(h_{79})\equiv 0\pmod{\qq_1}$, while $\mu_{\nn}(f_{79})=\mu_{\nn}(h_{79})=1$, $\mu_{(2)}(f_{79})=1$ and $\mu_{(2)}(h_{79})\equiv 0\pmod{\qq_1}$.

The observations about $f_{79}$ are entirely accounted for by the fact that the associated elliptic curves over $F$, in the isogeny class \lmfdbecnf{2.2.5.1}{79.1}a{}, have $F$-rational $2$-torsion points, in fact for \lmfdbecnf{2.2.5.1}{79.1}a2 the $2$-torsion is entirely $F$-rational. Thus the representation of $\Gal(\Qbar/F)$ on the $2$-torsion is trivial, with character value $1+1=2\equiv 0\pmod{\qq_1}$ for every element. This includes Frobenius elements for all $\pp\neq (2),\nn$, the primes for which the $\qq_1$-adic representation is unramified, so that a trace of Frobenius exists and is equal to the Hecke eigenvalue $\mu_{\pp}(f_{79})$.

To extend this to an explanation of what is observed for $h_{79}$, we need the following.

\begin{thm}\label{mainmod2}
$$\mu_{\pp}(f_{79})\equiv\mu_{\pp}(h_{79})\pmod{\qq_1},$$
for all primes $\pp\neq(2)$ in $F$.
\end{thm}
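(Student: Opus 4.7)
The strategy is to adapt the characteristic-$5$ argument of Sections~4 and 5, substituting the inert prime $(2)$ for the totally ramified prime $(\sqrt{5})$ and characteristic $2$ for characteristic $5$. First, I would reduce $f_{79}$ and $h_{79}$ modulo $\qq_1$ to obtain $\overline{f}_{79}$ and $\overline{h}_{79}$ as geometric Hilbert modular forms in characteristic $2$, viewed as sections of automorphic line bundles on the Pappas--Rapoport model $Y_U/\OOO$. Since $2$ is unramified in $F$, this model coincides with the smooth Deligne--Pappas model, and the residue field at $\qq_1$ equals $\OOO_F/(2)\simeq\FF_4$.

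Next, I would invoke the inert-case analogues of the partial theta operators and partial Hasse invariants from \cite{RX, D1, DS}. Frobenius permutes the two embeddings $\OOO_F/(2)\hookrightarrow\overline{\FF}_2$, giving rise to two partial theta operators $\Theta_1,\Theta_2$ and two partial Hasse invariants in weights of the form $(2,-1)$ and $(-1,2)$, whose product has parallel weight $(1,1)$. An appropriate composition of the $\Theta_i$ multiplies each Fourier coefficient $a_\xi$ by an expression whose vanishing detects precisely the condition $(2)\mid(\xi)\dd$, thereby excising the eigenvalue at $(2)$ from the comparison. Multiplying by suitable monomials in the partial Hasse invariants then equalises the weights of the transforms of $\overline{f}_{79}$ and $\overline{h}_{79}$, after squaring if necessary so that the weight difference lies in the Hasse-invariant sublattice, exactly as in Section~4.

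The intersection-theoretic Sturm bound of Lemma~\ref{Cleq96} then applies with only numerical modifications: the ratio $K\cdot K/(K\cdot D)$ and the geometry of $Y_{U'}(\CC)$ depend only on $F$ and on the auxiliary level at $(3)$, not on the characteristic, so the resulting bound on $C$ is again a modest multiple of $J=80$. Crucially, the Magma computations already carried out for Theorem~\ref{main} included the reductions of $\mu_\pp(f_{79})$ and $\mu_\pp(h_{79})$ modulo $\qq_1$, for the same list of $1313$ primes $\pp$ with $\tr(\xi)<97$; the observation recorded just before the statement of Theorem~\ref{mainmod2}, that these reductions agree (both vanishing) for every $\pp\neq(2),\nn$ in the list, directly verifies the congruence within the Sturm-bound window.

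The main obstacle is the verification that the partial theta operators at an inert prime of characteristic~$2$ enjoy the required effect on $q$-expansions: the sources \cite{RX, DS} treat this case, but the presentation in \cite{D1} that we invoked for Theorem~\ref{main} emphasises the totally ramified setting, and some bookkeeping of weight shifts is needed to confirm that the Sturm bound still lies within the range covered by the existing computations. A secondary point is checking that the auxiliary level at $(3)$, chosen in Section~4 to make the Pappas--Rapoport model smooth and the surface of general type, remains admissible in characteristic~$2$; this is automatic since $(3)\neq(2)$ and no special characteristic-$2$ obstruction is introduced.
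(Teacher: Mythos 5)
Your overall strategy matches the paper's, but there is a concrete numerical error at the weight-equalisation step that breaks the rest of your argument. You propose ``squaring if necessary so that the weight difference lies in the Hasse-invariant sublattice, exactly as in Section~4,'' but squaring does not suffice in characteristic $2$: the partial Hasse invariants now have weights $(-1,2)$ and $(2,-1)$, which span a sublattice of index $3$ in $\ZZ^2$ (determinant $-3$), and the weight discrepancy after applying a theta operator is $(0,2)$; one checks that $(0,4)$ is \emph{not} in this lattice, whereas $(0,6)=4(-1,2)+2(2,-1)$ is. So one must cube, not square. (In the mod-$5$ case squaring worked because $(0,4)=(-1,5)+(1,-1)$.) The paper in fact applies a single partial theta operator $\Theta_1$ --- one suffices, since $(2)$ is inert, so the multiplier at one embedding vanishes exactly when $(2)\mid(\xi)\dd$; your vaguer ``composition of the $\Theta_i$'' would only inflate the weights --- then divides both $\Theta_1(\overline{f}_{79})$ and $\Theta_1(\overline{h}_{79})$ by $H_1$, which is licensed by \cite[Theorem A]{D1} because the first components of the weights $(2,2)$ and $(2,4)$ are divisible by $\ell=2$; after cubing and multiplying one side by $H_1^4H_2^2$, and using that $\AAA_{(0,0),(3,0)}$ and $\AAA_{(0,0),(0,3)}$ are trivial (as $u^3\equiv 1\pmod{2}$ for units $u$), both sides land in weight $((12,12),(-6,-6))$. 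You omit the $H_1$-divisibility step entirely; without it the minimal common parallel weight is $(15,15)$, giving an even worse bound.

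This error propagates into your final claim, which is false as stated: the Sturm bound of Lemma~\ref{Cleq96} scales linearly with the parallel weight, so with weight $(12,12)$ rather than $(8,8)$ the bound becomes $C\leq 96\times\tfrac{12}{8}=144$, not $96$. Consequently the list of $1313$ primes $\pp$ dividing $\sqrt{5}\,\xi$ with $\tr(\xi)<97$, computed for Theorem~\ref{main}, does \emph{not} cover the required window, and your assertion that the existing computations ``directly verify the congruence within the Sturm-bound window'' fails. The paper must, and does, extend the eigenvalue computation to the range $97\leq\tr(\xi)<145$ and check the congruence mod $\qq_1$ there as well. Your remaining points (reduction mod $\qq_1$ to $\FF_4$, smoothness of the model at the unramified prime $2$, insensitivity of the intersection numbers $K\cdot K$ and $K\cdot D$ to the characteristic via the comparison with $Y_{U'}(\CC)$, admissibility of the auxiliary level at $(3)$) are all consistent with the paper.
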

\begin{proof} One argues very much as in the proof of Theorem \ref{main}. Now $\FF$ will be $\FF_4$ rather than $\FF_5$. Since $2$, unlike $5$, is inert in $F$, things are a little different. We have $H_1$ and $H_2$ now of weights $\kk=(-1,\ell)=(-1,2)$ and $(\ell,-1)=(2,-1)$, respectively, both with trivial $\LL$. Since there are two embeddings of $\OOO_F/(2)$ in $\Fbar_2$, there are now two partial theta operators, $\Theta_1: M_{\kk,\LL}(U, \FF)\rightarrow M_{\kk+(1,2),\LL-(1,0)}(U, \FF)$ and $\Theta_2: M_{\kk,\LL}(U, \FF)\rightarrow M_{\kk+(2,1),\LL-(0,1)}(U, \FF)$ (where $2$ is $\ell$), as in \cite[\S 5.2]{D1}. These are essentially the partial Hasse invariants and theta operators of Andreatta and Goren \cite{AG}.

Starting with $\overline{f}_{79}$ and $\overline{h}_{79}$, of weights $\kk=(2,2)$ and $(2,4)$ respectively, first we apply $\Theta_1$ to both sides, to get weights $(\kk,\LL)=((3,4),(-1,0))$ and $((3,6),(0,0))$. According to  \cite[Theorem A]{D1}, since the (first) components of $(2,2)$ and $(2,4)$ are divisible by $\ell=2$, both $\Theta_1(\overline{f}_{79})$ and   $\Theta_1(\overline{h}_{79})$ are divisible by $H_1$. Dividing both by $H_1$ produces weights $(\kk,\LL)=((4,2),(-1,0))$ and $((4,4),(0,0))$. Since $u^3\equiv 1\pmod{(2)}$ for any (totally positive) $u\in \OOO_F^{\times}$, $\AAA_{(0,0),(0,3)}$ and $\AAA_{(0,0),(3,0)}$ are trivial bundles \cite[\S 3.4]{DS}. 
Therefore the weights $(\kk,\LL)=((12,12),(-3,0))$ and $((12,12),(0,0))$ we get after cubing and multiplying by $H_1^4H_2^2$, or just cubing, respectively, may be converted to $((12,12),(-6,-6))$.

Multiplying the original bound $C\leq 96$ by $12/8 $, it becomes now $C\leq 144$. So the earlier computation needs to be extended to $97\leq \tr(\xi)< 145$. The results, confirming the congruence, may be found in the output file in \cite{DT}.
\end{proof}
\begin{cor} $\mu_{\pp}(h_{79})\equiv 0\pmod{\qq_1}$ for all $\pp\neq (2), \nn$.
\end{cor}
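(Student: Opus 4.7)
The plan is to combine Theorem \ref{mainmod2} with the observation about $f_{79}$ made in the paragraph preceding Theorem \ref{mainmod2}. Specifically, the isogeny class \lmfdbecnf{2.2.5.1}{79.1}a{} contains a curve (namely \lmfdbecnf{2.2.5.1}{79.1}a2) whose full $2$-torsion subgroup is $F$-rational, so the mod-$2$ representation of $\Gal(\Qbar/F)$ associated with $f_{79}$ is trivial. For any prime $\pp\neq(2),\nn$ of $\OOO_F$, this representation is unramified at $\pp$, and the Eichler-Shimura relation equates $\mu_{\pp}(f_{79})$ with the trace of Frobenius, which is $1+1=2$. Since $\qq_1$ divides $(2)$ in $F(h_{79})$, this gives $\mu_{\pp}(f_{79})\equiv 0 \pmod{\qq_1}$.

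Then the mod-$\qq_1$ congruence of Hecke eigenvalues from Theorem \ref{mainmod2}, valid for all $\pp\neq(2)$, propagates this vanishing: $\mu_{\pp}(h_{79})\equiv \mu_{\pp}(f_{79})\equiv 0\pmod{\qq_1}$ for every $\pp\neq(2),\nn$. That is the entire proof; there is no real obstacle, since Theorem \ref{mainmod2} has already been established and the $F$-rationality of the $2$-torsion of the relevant elliptic curve can be read off directly from LMFDB.
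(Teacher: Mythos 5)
Your proposal is correct and is precisely the argument the paper intends: the corollary is the immediate combination of Theorem \ref{mainmod2} with the observation, made in the paragraph before that theorem, that the trivial mod-$2$ representation of $\Gal(\Qbar/F)$ attached to $f_{79}$ (via the $F$-rational $2$-torsion of the elliptic curve \lmfdbecnf{2.2.5.1}{79.1}a2) gives trace of Frobenius $1+1=2\equiv 0\pmod{\qq_1}$ at every unramified prime $\pp\neq(2),\nn$, whence $\mu_{\pp}(f_{79})\equiv 0\pmod{\qq_1}$. Chaining this with the congruence $\mu_{\pp}(h_{79})\equiv\mu_{\pp}(f_{79})\pmod{\qq_1}$ of Theorem \ref{mainmod2} yields the corollary, exactly as in the paper.
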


{\bf Acknowledgments. } The main question addressed here came out of online discussions with Vasily Golyshev, Duco van Straten and others. Hanneke Wiersema, as noted earlier in more detail, directed the first-named author to some of the tools necessary to prove Theorem \ref{main}. We are grateful to two anonymous referees for their careful reading and valuable comments.

\end{document}